\newtheorem{thm}{Theorem}[section]
\newtheorem{lem}[thm]{Lemma}
\newtheorem{cor}[thm]{Corollary}
\newtheorem{ass}{Assumption}
\newtheorem{res}{Restriction}
\newtheorem{rem}{Remark}
\newcommand{\bmat}[1]{\begin{bmatrix}#1\end{bmatrix}}
\newcommand{\xalgm}{\mathcal{X}_{\rm Alg}}
\newcommand{\xalg}{$\xalgm$ }
\newcommand{\xpicker}{${\rm XPicker}(\xalgm, \varepsilon_I)$ }
\newcommand{\onestage}{{\rm OneStage}(\hat{x}; \hat{V}_I)}
\title{Generalized Dual Dynamic Programming for Infinite Horizon Problems in Continuous State and Action Spaces}
\author{Joseph Warrington${}^{\star\dagger}$,~\IEEEmembership{Member,~IEEE,} Paul N.~Beuchat${}^\star$, and John Lygeros${}^\star$,~\IEEEmembership{Fellow,~IEEE}
\thanks{${}^\star$ Automatic Control Laboratory, Swiss Federal Institute of Technology (ETH) Zurich,
Physikstrasse 3, 8092 Zurich, Switzerland. Contact: {\tt\small \{warrington,beuchatp,lygeros\}@control.ee.ethz.ch}}
\thanks{${}^\dagger$ Corresponding author.}}
\begin{document}
\maketitle

\thispagestyle{empty}
\begin{abstract}
We describe a nonlinear generalization of dual dynamic programming theory and its application to value function estimation for deterministic control problems over continuous state and action spaces, in a discrete-time infinite horizon setting. We prove, using a Benders-type argument leveraging the monotonicity of the Bellman operator, that the result of a one-stage policy evaluation can be used to produce nonlinear lower bounds on the optimal value function that are valid over the entire state space. These bounds contain terms reflecting the functional form of the system's costs, dynamics, and constraints. We provide an iterative algorithm that produces successively better approximations of the optimal value function, and prove under certain assumptions that it achieves an arbitrarily low desired Bellman optimality tolerance at pre-selected points in the state space, in a finite number of iterations. We also describe means of certifying the quality of the approximate value function generated. We demonstrate the efficacy of the approach on systems whose dimensions are too large for conventional dynamic programming approaches to be practical.
\end{abstract}

%%%%%%%%%%%%%%%%%%%%%%%%%%%%%%%%%%%%%%%%%%%%%%%%%%%%%%%%%%%%%%%%%

\section{Introduction}

Dynamic Programming (DP) facilitates the selection of optimal actions for multi-stage decision problems, by representing future rewards or costs via a \emph{value function}, or cost-to-go function. This allows the next action to be computed using a smaller ``one-stage'' optimization parameterized by the current system state, without considering the series of future decisions that will follow. This article is concerned with the estimation of value functions for deterministic infinite-horizon problems in discrete time, with continuous state and action (or control input) spaces. This common subclass of Markov decision processes (MDPs) \cite{chow_optimal_1991} is often used to model problems in optimal control \cite{bertsekas_stochastic_2004} and reinforcement learning \cite{busoniu_reinforcement_2010}. 

The canonical DP algorithm was originally developed for problems on discrete state and action spaces \cite{bellman_dynamic_1957}. For continuous state and action spaces it is typical to discretize, or ``grid'' these spaces, and revert to the same algorithmic approach as for discrete problems. This causes two computational issues. Firstly, under quite general assumptions the cost is exponential in the state and action dimensions, $\mathcal{O}([(1-\gamma) \overline{\delta}]^{-(2n + m)})$, where $\gamma$ is the discount factor, $\overline{\delta}$ is the desired value function approximation accuracy, and $n$ and $m$ are the continuous state and action dimensions respectively \cite[eq.~(7.13)]{chow_optimal_1991}. Secondly, to compute an optimal action one must interpolate between grid points of the successor state to estimate the cost-to-go. 

One therefore seeks more practical DP approaches for continuous problems. In rare cases, the most famous being the unconstrained linear-quadratic regulator (LQR) \cite{pappas_numerical_1980}, the optimal value function and control policy can be calculated exactly. The value function for \emph{constrained} LQR is also computable, and is piecewise quadratic \cite{chmielewski_constrained_1996, grieder_computation_2004}. Algorithms exists for computing its polyhedral ``critical regions,'' in each of which the optimal control action is a different affine function of the state \cite{bemporad_explicit_2002}. This computation can also be performed for piecewise linear cost functions. However, the solutions quickly become expensive to compute and store, as the number of regions grows combinatorially with the number of state and input constraints, and with the trajectory length considered. Approximations exist, but can generally be applied only after the full control law has been computed \cite{jones_polytopic_2010}. For almost any other problem, the critical regions are far harder to characterize and compute, and no mature algorithms exist.

A group of methods known  as \emph{approximate DP} (ADP) has arisen for continuous problems where the above methods fail. ADP is closely related to reinforcement learning \cite{sutton_reinforcement_2017}, in that both fields are concerned with finding approximate value functions and policies for MDPs \cite{busoniu_reinforcement_2010} using more or less model information. A number of recent ADP methods relevant to the present article build on the so-called linear programming (LP) approach of de Farias and Van Roy \cite{de_farias_linear_2003}. In this approach one maximizes a candidate value function (over some restricted function class) subject to a so-called \emph{Bellman inequality} constraint, guaranteeing that the result lower-bounds the optimal value function. We critique existing methods in this category after describing our contributions below. 

The second DP method of relevance to this article is \emph{dual DP} (DDP), first used to solve finite-horizon optimization problems in the area of hydro power planning \cite{pereira_multi-stage_1991}, and since applied in finance \cite{dantzig_multi-stage_1993}, economics \cite{egging_benders_2013}, and power system optimization \cite{zhao_multi-stage_2013}. DDP splits a multi-stage problem into single-stage problems, each connected to its successor by an estimate of its value function. The algorithm generates a forward trajectory through the planning horizon, in which control decisions are made by solving the single-stage problems, and then performs a backward pass in which Benders' decomposition \cite{benders_partitioning_1962, geoffrion_generalized_1972}, is used to generate tighter lower-bounding hyperplanes for each stage's value function. At convergence one obtains a locally-exact representation of the value function around the optimal trajectory \cite{philpott_convergence_2008,shapiro_analysis_2011}. %Powell and others have since presented modifications to speed up convergence, particularly in a stochastic setting \cite{powell_approximate_2011,de_matos_improving_2015}.
%\footnote{This is distinct from Differential DP for continuous-time optimal control \cite{jacobson_new_1968}.}

%The lower-bounding functions generated in our GDDP formulation apply for the (single) infinite-horizon value function of the problem, whereas the hyperplanes generated by DDP are lower bounds on value functions for specific time steps of a multi-stage problem. GDDP could alternatively be applied to finite-horizon problems in which more general lower-bounding functions may be beneficial, for example hydro-power scheduling in the presence of non-convexities \cite[\S 7]{abgottspon_hydro_2015}, but we do not explore this in the present article.

\subsection{Contributions}

In this article we propose an infinite-horizon, nonlinear generalization of DDP theory, using duality arguments and the properties of the Bellman operator to construct provably-increasing analytical lower bounds on an optimal value function. We refer to this as \emph{generalized DDP} (GDDP). To the best of our knowledge, duality arguments of this kind have not been used to estimate value functions in an infinite-horizon setting before. Specifically, we make the following contributions:

\begin{enumerate}
\item We show, using a Benders decomposition argument, that a globally-valid lower bound on the optimal value function can be parameterized by the dual solution of a single one-stage control problem, in which the ``current'' system state appears as a fixed parameter. The lower bound contains terms reflecting the problem's stage cost, dynamics, and constraints. It is also reflexive, in that it can immediately be used within the same one-stage problem, with the same or a different state parameter, to obtain a new bounding function. This is possible thanks to the monotonicity property of the Bellman operator and the existence of a single value function for the infinite-horizon problem.

\item We provide an iterative algorithm for improving the value function estimate, which is represented as the pointwise maximum of all lower bounds generated so far. Each algorithm iteration has, under certain assumptions on the problem data, polynomial complexity in the state and action dimension. We prove that under a strong duality assumption, an iteration of this algorithm causes strict improvement in the value function estimate at any value of the state where a so-called Bellman error \cite[\S 3.6.1]{busoniu_reinforcement_2010} is present. This is reminiscent of the well-known improvement property of value function iteration for discrete reinforcement learning problems \cite[\S 4.1]{sutton_reinforcement_2017}. 

\item We provide a generic guarantee that the value function estimate converges pointwise over the region of the state space where the optimal value function is finite. If strong duality holds in the one-stage problem, the Bellman error converges to zero for all points in the state space revisited with positive probability at each iteration. The GDDP algorithm we define achieves a given strictly positive Bellman error tolerance at all of a finite number of pre-selected state space points, within finite iterations.

\item We apply GDDP to linear and nonlinear control problems for which conventional direct DP approaches fail.
\end{enumerate}

GDDP offers several potential attractions. Each lower-bounding function generalizes an estimate of the value function away from the single point in the state space where it was derived, efficiently exploiting the dual result of the one-stage problem. In contrast, ``gridding'' approaches to DP for continuous-state problems do not generate global lower bounds at all, and have an unattractive up-front exponential cost \cite{chow_optimal_1991}. Although we do not claim to have overcome the so-called curse of dimensionality in this manner -- in particular we do not bound the number of iterations required -- the algorithm at least avoids the need to interpolate between points on a grid \cite{chow_optimal_1991}, or in the case of local approximation techniques, to identify approximate value functions around nearby sampled points \cite{atkeson_random_2008}.\footnote{In some circumstances with a discrete action set, random sampling of the state can lead to a DP algorithm for which the solution time remains polynomial in the state dimension \cite{rust_using_1997}. However to our knowledge there are no such results for problems with continuous state \emph{and} action spaces.} %Lastly, where the input dimension is relatively low, valid lower-bounding functions can be generated even for non-convex value functions of high state dimension, since the one-stage problem, which is solved with the state as a fixed parameter, still has limited complexity. To generate a new lower-bounding function, it need only be possible to extract optimal dual variables from this smaller problem.

GDDP can be compared with the LP approach to ADP, as both methods maximize a value function while respecting a Bellman inequality condition. Our pointwise maximum representation of the value function is more expressive than the single polynomial employed in \cite{summers_approximate_2013}, and as such we do not rely on high-order parameterizations to obtain a good-quality approximation. It is also clearly more expressive than the earlier quadratic bound \cite{wang_performance_2009}. Pointwise maximum representations exist within the LP-approach literature \cite{wang_approximate_2015, beuchat_point-wise_2017}. In \cite{wang_approximate_2015} an \emph{iterated} Bellman inequality is derived, enlarging the set of feasible parameterizations. However in that formulation one must optimize over all value function ``iterations'' simultaneously, and the option to take a pointwise maximum is essentially a by-product of the approach. The authors of \cite{beuchat_point-wise_2017} avoid this simultaneous optimization by converting the result of each iteration into fixed input data for the next. However all of \cite{summers_approximate_2013}, \cite{wang_approximate_2015}, and \cite{beuchat_point-wise_2017} are difficult to extend beyond polynomial dynamics due to the way they reformulate the Bellman inequality constraint. Another example of an approach accommodating a pointwise maximum representation is \cite{lincoln_relaxing_2006}, in which the Bellman condition is relaxed by a predetermined multiplicative factor. Inner- and outer-approximate value functions are then constructed to satisfy the relaxed condition. Although the algorithmic principle is general, the parameterization of these bounding functions requires application-specific insight. Our Benders argument is distinct from existing ADP literature, leads to implementable algorithms for a wide range of problems, and requires no \textit{a priori} knowledge of the form of the optimal value function. For linear systems, GDDP involves solving more scalable optimization problems than the semidefinite programs of \cite{summers_approximate_2013}, \cite{wang_approximate_2015}, and \cite{beuchat_point-wise_2017}.

\subsection{Article structure}

Section \ref{sec:ProblemStatement} states the infinite-horizon control problem to be studied, and reviews standard results on value functions and the Bellman equation. Section \ref{sec:LBAlgorithm} derives the proposed algorithm for producing successive approximations to the optimal value function, and proves some of its key properties. Section \ref{sec:Implementation} discusses implementation choices. Section \ref{sec:Results} presents numerical simulations for linear systems of various sizes as well as a nonlinear example, and Section \ref{sec:Conclusion} concludes.

\subsection{Notation}

The symbol $\mathbb{R}^n$ ($\mathbb{R}^n_+$) represents the space of (non-negative) $n$-dimensional vectors, and $\mathbb{S}^n_{++}$ ($\mathbb{S}^n_{+}$) represents the cone of symmetric positive (semi-)definite $n \times n$ matrices. Inequality $a \leq b$ for $n$-dimensional vectors means $b - a \in \mathbb{R}^n_+$, $A \preceq B$ for $n \times n$ matrices means $B - A \in \mathbb{S}_+^n$, and $A \prec B$ means $B - A \in \mathbb{S}_{++}^n$. The spectral radius (magnitude of the largest-magnitude eigenvalue) of a square matrix $A$ is denoted $\sigma(A)$. Notation $\max \{ a, b, ... \}$ represents the maximum of scalars $a$, $b$, and so on. When $\max\{\ldots\}$ is used with a subscript parameter below the symbol $\max$, it denotes the maximum attained value of the parameterized quantity in the braces. The symbol $0$ represents a vector or matrix of zeroes of appropriate size, and $\mathbf{1}$ represents a vector of ones. Symbol $I_n$ denotes the $n \times n$ identity matrix.  The notation ${\rm diag}\{a, b, \ldots\}$ signifies a diagonal matrix whose entries are $a$, $b$, and so on.

%%%%%%%%%%%%%%%%%%%%%%%%%%%%%%%%%%%%%%%%%%%%%%%%%%%%%%%%%%%%%%%%%

\section{Problem statement} \label{sec:ProblemStatement}

\subsection{Infinite-horizon control problem} \label{sec:IHProblem}

We consider the solution of an infinite-horizon deterministic optimal control problem in which the stage cost function, dynamics, and constraints are time-invariant:
\begin{subequations} \label{eq:IHProblem}
\begin{align}
V^\star(x) := \inf_{u_0,u_1,\ldots} \quad & \sum_{t=0}^{\infty} \gamma^t \ell(x_t, u_t) \label{eq:IHObj}\\
\text{s.~t.} \quad & x_{t+1} = f(x_t, u_t), \quad t = 0,1,\ldots \, , \\
& Eu_t \leq h(x_t), \quad t = 0,1,\ldots \, , \label{eq:IHSIC} \\
& x_0 = x \, .
\end{align}
\end{subequations}
The state at time $t$ is denoted $x_t \in \mathcal{X} \subseteq \mathbb{R}^n$, and the action, or input, is denoted $u_t \in \mathcal{U} \subseteq \mathbb{R}^m$. Constant $\gamma \in \left(0, 1\right]$ is a discount factor for costs encountered later in the horizon, $\ell : \mathcal{X} \times \mathcal{U} \rightarrow \mathbb{R}$ is the stage cost function, and the dynamics are characterized by $f : \mathcal{X} \times \mathcal{U} \rightarrow \mathcal{X}$. The sets $\mathcal{X}$ and $\mathcal{U}$ are the continuous state and action spaces respectively. The state-input constraints \eqref{eq:IHSIC} are parameterized by $E \in \mathbb{R}^{n_c \times m}$, where $n_c$ is the number of such constraints present, and a mapping $h : \mathcal{X} \rightarrow \mathbb{R}^{n_c}$ from the state to a vector of right-hand-side quantities. The function $V^\star(x)$ is the parametric infimum of problem \eqref{eq:IHProblem} as $x$ is varied.

%Although $x_0$ is random, the state and input trajectories, $x_t$ for $t \geq 1$ and $u_t$ for $t \geq 0$, evolve deterministically once it is revealed. They are governed by dynamics $f : \mathbb{R}^n \times \mathbb{R}^m \rightarrow \mathbb{R}^n$, and policy $\pi : \mathbb{R}^n \rightarrow \mathbb{R}^m$ chosen without knowledge of $x_0$.

\subsection{Optimal value function}

We define the Bellman operator $\mathcal{T}$ on a proper function $V: \mathcal{X} \rightarrow \mathbb{R} \cup \{+\infty\}$ in terms of the optimal objective value of a \emph{one-stage problem} related to \eqref{eq:IHProblem}:
\begin{equation} \label{eq:TVDef}
\mathcal{T}V(x) := \inf_{u \in \mathcal{U}(x)} \left\{ \ell(x, u) + \gamma V(f(x, u)) \right\} \, ,
\end{equation}
where 
\begin{equation}
\mathcal{U}(x) := \left\{ u \in \mathcal{U} \, :\,  Eu \leq h(x) \right\} \, .
\end{equation}
We refer to function $V$ as an \emph{approximate value function}, as its purpose in \eqref{eq:TVDef} is to represent all costs incurred from time $t+1$ onwards along the trajectory. Where the infimum in \eqref{eq:TVDef} is attained, one can define a control policy satisfying
\begin{equation} \label{eq:PiDef}
u = \pi(x) \in \underset{u \in \mathcal{U}(x)}{\arg \min}\left\{ \ell(x, u) + \gamma V(f(x, u)) \right\} \, .
\end{equation}
In accordance with other DP literature \cite[\S 2.2]{busoniu_reinforcement_2010}, we refer to $\pi(x)$ as the \emph{greedy policy} for $V$, since only the stage cost for the immediate time step is modelled explicitly in the calculation. 

An \emph{optimal value function} $V^\star$ for problem \eqref{eq:IHProblem} satisfies the well-known Bellman optimality condition,
\begin{equation} \label{eq:BellOpt}
V^\star(x) = \mathcal{T}V^\star(x) \, , \quad \forall x \in \mathcal{X} \, .
\end{equation}
We make the following assumption:
\begin{ass} \label{ass:UniqueVStar}
The stage cost function $\ell(\cdot,\cdot)$ is non-negative on its domain, and a unique, time-invariant, optimal value function satisfying \eqref{eq:BellOpt} exists.
\end{ass}

This existence condition is satisfied in various settings. For example, a straightforward guarantee of a unique $V^\star$ is that the stage cost be bounded on $\mathcal{X}$, and the discount factor $\gamma$ be strictly less than $1$. If in addition $\mathcal{U}(x)$ is non-empty for all $x \in \mathcal{X}$, then $V^\star$ is also finite on $\mathcal{X}$ \cite[Prop.~1.2.3]{bertsekas_dynamic_2012}.  Alternatively, unbounded stage costs such as quadratic functions over a non-compact domain may be accommodated under different assumptions, such as those of Hern{\' a}ndez-Lerma and Lasserre \cite[\S 4.2]{hernandez-lerma_discrete-time_1996}. Our later results are generic enough to rely only on Assumption \ref{ass:UniqueVStar} itself holding, and not on specific settings such as these. 

An additional, obvious property of $V^\star$ follows:

\begin{lem}[Non-negative optimal value function] \label{lem:VGEZero}
Under Assumption \ref{ass:UniqueVStar}, $V^\star(x) \geq 0 \, \forall x \in \mathcal{X}$.
\begin{proof}
The stage costs summed in \eqref{eq:IHProblem} are non-negative, thus their infimum over control decisions is also non-negative.
\end{proof}
\end{lem}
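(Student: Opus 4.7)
The plan is to argue directly from the definition of $V^\star$ in \eqref{eq:IHProblem} together with the non-negativity clause of Assumption~\ref{ass:UniqueVStar}, rather than using the Bellman fixed-point equation \eqref{eq:BellOpt}. The objective \eqref{eq:IHObj} is a discounted sum $\sum_{t=0}^{\infty} \gamma^t \ell(x_t, u_t)$ with $\gamma \in (0,1]$, so each summand is non-negative by Assumption~\ref{ass:UniqueVStar}, and hence every partial sum is non-negative as well.

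Taking the limit as the horizon tends to infinity preserves this sign, so for any admissible control sequence $(u_0, u_1, \ldots)$ generated from an initial state $x \in \mathcal{X}$, the objective value is a non-negative extended real number (possibly $+\infty$). Consequently the infimum over all admissible sequences cannot be smaller than zero, i.e.\ $V^\star(x) \geq 0$. If no admissible sequence exists for a particular $x$, the infimum is $+\infty$ by convention, which also trivially satisfies $V^\star(x) \geq 0$.

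There is no real obstacle here: the claim is an immediate monotonicity consequence of the sign of the integrand. The only subtle point worth flagging is to make sure that the extended-real-valued nature of $V^\star$ (taking values in $\mathbb{R} \cup \{+\infty\}$, as allowed by the proper-function setting introduced after \eqref{eq:BellOpt}) is handled consistently, but this does not affect the inequality. I would therefore present the argument in essentially two lines, matching the author's own sketch.
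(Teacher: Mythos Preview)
Your argument is correct and is essentially the same as the paper's own proof: both observe that each discounted stage cost $\gamma^t \ell(x_t,u_t)$ is non-negative by Assumption~\ref{ass:UniqueVStar}, so the objective in \eqref{eq:IHObj} is non-negative for every admissible input sequence and hence so is its infimum. Your version simply spells out the limit and the $+\infty$ case a little more explicitly, but the approach is identical.
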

If the limit under repeated value iteration $\lim_{N \rightarrow \infty}$ $\mathcal{T}^N V(x)$ exists at $x$, then $V^\star(x)$ is finite and equal to this limiting value \cite[Prop.~1.2.1]{bertsekas_dynamic_2012}. Otherwise, we say $V^\star(x) = +\infty$ and that $V^\star(x) = \mathcal{T}V^\star(x)$ there by convention.

%For any non-negative value function $V(x)$ we have $\mathcal{T}V(x) = \min_{u \in \mathcal{U}(x)}\{\ell(x,u) + \gamma V(f(x,u))\} \geq 0 \, \forall x \in \mathcal{X}$, since $\ell(x,u)$ is non-negative. Therefore $\mathcal{T}^2V(x)$ is also non-negative, and by induction (under the assumed existence of $V^\star(x)$), repeated application of the Bellman operator on $V$ at $x$ either converges to a finite fixed point \cite[Prop.~1.2.1]{bertsekas_dynamic_2012}, in which case $\lim_{t \rightarrow \infty} \mathcal{T}^t V(x) = V^\star(x) \geq 0$, or it diverges to $+\infty$ for states where $V^\star(x) = +\infty$.
%\end{proof}
%\end{lem}
%This result is consistent with the definition of $V^\star$ in \eqref{eq:IHProblem}, being the sum of non-negative stage costs.

\subsection{Restriction of problem class}

In addition to assuming that the stage cost is non-negative, we restrict it to the following form, which ensures it will be compatible with our use of epigraph variables in Section \ref{sec:LBAlgorithm}.
\begin{res} \label{res:StageCost}
The stage cost $\ell(x,u)$ consists of $K \geq 1$ terms, each of which is the pointwise maximum of general functions of $x$ plus linear and quadratic terms in $u$:
\begin{equation}  \label{eq:ResStageCost}
\ell(x,u) = \sum_{k=1}^K \ell_k(x,u) \, ,
\end{equation}
where
\begin{equation} \label{eq:ResStageCost2}
\ell_k(x,u) := \max_l \left\{ \phi_{kl}(x) +  r_{kl}^\top u + \frac{1}{2} u^\top R_{kl} u  \right\}
\end{equation}
and the maximum is over a finite number of indices $l$ for each $k$. Each function $\phi_{kl}(x)$ is finite-valued for any given $x \in \mathcal{X}$.
\end{res}

Restriction \ref{res:StageCost} accommodates, but is not limited to, affinely-scaled $p$-norms for $p \in \{1,2,\infty\}$, and convex piecewise affine functions of $x$ and $u$.

In fact, to aid developments from Section \ref{sec:LBAlgorithm} onward, we will model the stage cost as the sum of $K$ epigraph variables, $\ell(x,u) = \mathbf{1}^\top \beta$ for $\beta \in \mathbb{R}^K$, and replace the $K$ terms of the form \eqref{eq:ResStageCost2} with a single list of $J$ constraints on $\beta$, \[ e_j^\top \beta \geq \phi_j(x) + r_j^\top u + \frac{1}{2}u^\top R_j u \, , \quad j = 1,\ldots,J \, . \] All indices $kl$ have been rolled into a single index $j$, and $e_j$ is a unit vector selecting the element of $\beta$ to which the $j^\text{th}$ constraint applies.

We also restrict the dynamics:
\begin{res} \label{res:Dynamics}
The dynamics $f(x,u)$ take one of the following input-affine forms:
\begin{enumerate} 
\item[(a)] If in \eqref{eq:ResStageCost2} we have $R_{kl} \succ 0$ for each index $kl$, then $f(x,u) = f_x(x) + F_u(x)u$, where $F_u(x)$ has finite entries for any given $x \in \mathcal{X}$.
\item[(b)] Otherwise, the dynamics take the form $f(x,u) = f_x(x) + Bu$, where $B \in \mathbb{R}^{n \times m}$.
\end{enumerate}
In both (a) and (b), $f_x(x)$ has finite entries for any given $x \in \mathcal{X}$.
\end{res}
The reasons for Restriction \ref{res:Dynamics} will become clear in the derivation of lower bounds on $V^\star(x)$ in Section \ref{sec:LBLemma}.

\subsection{Unconstrained LQR} \label{sec:LQR}

A special tractable case of problem \eqref{eq:IHProblem} is the unconstrained LQR problem, in which $f(x,u) = Ax + Bu$ with $(A,B)$ stabilizable, and $\ell(x,u) = \tfrac{1}{2}x^\top Q x + \tfrac{1}{2}u^\top R u$ with $Q \succeq 0$ and $R \succ 0$. For this problem the optimal value function is $V^\star(x) = \frac{1}{2}x^\top P x$, where $P$ is the solution of the (discounted) discrete algebraic Riccati equation \cite{pappas_numerical_1980}. The discount factor $\gamma$ is handled via the substitutions $\tilde{A} := \sqrt{\gamma}A$ and $\tilde{B} := \sqrt{\gamma}B$.

%%%%%%%%%%%%%%%%%%%%%%%%%%%%%%%%%%%%%%%%%%%%%%%%%%%%%%%%%%%%%%%%%

\section{Lower bounding algorithm} \label{sec:LBAlgorithm}

We now describe an algorithm for producing successively better approximations of $V^\star(x)$ from below.

\subsection{Epigraph form of one-stage problem \eqref{eq:TVDef}}

Let an approximation of the optimal value function be denoted $\hat{V}_I$, taking the form
\begin{equation} \label{eq:VHatDef}
\hat{V}_I(x) = \max_{i=0,\ldots,I} g_i(x)
\end{equation}
where $g_i : \mathcal{X} \rightarrow \mathbb{R} \cup \{+\infty\}$ are proper functions and $I$ is finite. We define $g_0(x) = 0$, which from Lemma \ref{lem:VGEZero} is always a valid lower bound on $V^\star(x)$.

Under Restrictions \ref{res:StageCost} and \ref{res:Dynamics} the one-stage problem in \eqref{eq:TVDef} can be rewritten using epigraph variables $\beta \in \mathbb{R}^{K}$ and $\alpha \in \mathbb{R}$ to represent the stage cost and cost-to-go respectively. Using dynamics of form (a) from Restriction \ref{res:Dynamics} -- the derivation is the same for form (b) -- one obtains the following:
\begin{subequations}\label{eq:OSP}
\begin{align}
\inf_{u, x_+, \beta, \alpha} \quad & \mathbf{1}^\top \beta + \gamma \alpha \\
\text{s.~t.} \quad & x_+ = f_x(\hat{x}) + F_u(\hat{x})u\, , \label{eq:OSPDyn} \\
& Eu \leq h(\hat{x}) \, , \label{eq:OSPConstr}\\
& e_j^\top \beta \geq \phi_j(\hat{x}) + r_j^\top u + \tfrac{1}{2}u^\top R_j u\,, \nonumber \\
& \hspace{3.9cm} j = 1,\ldots,J\, , \label{eq:OSPStageCost}\\
& \alpha \geq g_i(x_+)\,, \quad i = 0\,, \ldots, I\, . \label{eq:OSPLB}
\end{align}
\end{subequations}
This is a parametric optimization problem in which the current state of the system $\hat{x}$ is the (fixed) parameter. Vector $x_+$ has been introduced to model explicitly the successor state to $\hat{x}$ when input $u$ is applied.

The following lemma establishes equivalence between \eqref{eq:OSP} and \eqref{eq:TVDef}.
 
\begin{lem} \label{lem:OSPTV}
If problem \eqref{eq:OSP} is feasible and an optimal solution $(\hat{u}^\star, \hat{x}_+^\star, \hat{\beta}^\star, \hat{\alpha}^\star)$ is attained for parameter $\hat{x}$, \[\mathbf{1}^\top \hat{\beta}^\star + \gamma\hat{\alpha}^\star = \min_{u \in \mathcal{U}(\hat{x})} \left\{ \ell(\hat{x}, u) + \gamma \hat{V}_I(f(\hat{x},u)) \right\} = \mathcal{T}\hat{V}(\hat{x}) \, . \]
\end{lem}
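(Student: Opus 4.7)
The plan is a direct verification that the epigraph reformulation (6) is tight at optimum, reducing its objective to the expression inside the $\min$ of (3). I would first decouple the auxiliary variables $(\beta,\alpha,x_+)$ from $u$ by showing that, for any feasible choice of $u$, the remaining variables are pinned to specific values at optimum; then I would re-express the optimization as a minimization over $u$ alone.

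The first step handles $\beta$. Since $\beta$ enters the objective only through $\mathbf{1}^\top\beta$ with unit coefficient, and enters the constraints only via the lower bounds in (6d), at optimum each component $\beta_k$ is driven down to the largest of its lower bounds. Using the correspondence between the double index $(k,l)$ of Restriction 1 and the single index $j$ of (6d) (with $e_j$ selecting the appropriate component of $\beta$), this gives $\beta_k^\star = \ell_k(\hat{x},u)$ and hence $\mathbf{1}^\top\beta^\star = \sum_k \ell_k(\hat{x},u) = \ell(\hat{x},u)$. This is the only place where Restriction 1 is used, and it is the main (though minor) bookkeeping step of the proof.

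The second step handles $\alpha$ and $x_+$. The dynamics constraint (6b) forces $x_+ = f(\hat{x},u)$ under the form stated in Restriction 2, so $x_+$ is not a free variable. With $x_+$ fixed, the constraints (6e) and the positive coefficient $\gamma$ in front of $\alpha$ force $\alpha^\star = \max_{i=0,\ldots,I} g_i(x_+) = \hat{V}_I(f(\hat{x},u))$ by the definition (5).

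Finally, after substituting $\mathbf{1}^\top\beta^\star$ and $\alpha^\star$ back into the objective, problem (6) becomes the unconstrained-in-$(\beta,\alpha,x_+)$ minimization
\begin{equation*}
\min_{u}\,\bigl\{\ell(\hat{x},u)+\gamma\hat{V}_I(f(\hat{x},u))\bigr\}\quad\text{s.~t.}\quad Eu\leq h(\hat{x}),
\end{equation*}
which is exactly $\mathcal{T}\hat{V}_I(\hat{x})$ by the definitions (3) and (4), provided the set $\mathcal{U}$ in $\mathcal{U}(\hat{x})$ is either $\mathbb{R}^m$ or subsumed by the constraints already written in (6c). Since the lemma assumes the infimum in (6) is attained, the same value is attained by the right-hand side, establishing the claimed equality. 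The only mildly delicate point is the epigraph-tightness argument for $\beta$ when several constraints $j$ share the same selection vector $e_j$; this is resolved by the observation that the maximum over such $j$'s coincides with the max-over-$l$ appearing in (3b).
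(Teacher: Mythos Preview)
Your proposal is correct and follows essentially the same approach as the paper: eliminate $x_+$ via the dynamics constraint, argue that the epigraph variables $\beta$ and $\alpha$ must be tight at optimum (otherwise they could be decreased), and conclude that the objective reduces to $\ell(\hat{x},u)+\gamma\hat{V}_I(f(\hat{x},u))$ minimized over $u\in\mathcal{U}(\hat{x})$. Your write-up is in fact more explicit than the paper's, which compresses the tightness argument into a single sentence, and you correctly flag the implicit assumption that the ambient set $\mathcal{U}$ is captured by constraint (6c).
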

\begin{proof}

The condition $u \in \mathcal{U}(\hat{x})$ is reflected in constraint \eqref{eq:OSPConstr}, and constraint \eqref{eq:OSPDyn} can be eliminated by substituting the definition of $x_+$ into constraint \eqref{eq:OSPLB}. It must hold that $\mathbf{1}^\top \hat{\beta}^\star = \sum_{k=1}^K \hat{\beta}^\star_k = \ell(\hat{x},\hat{u}^\star)$, and that $\gamma\hat{\alpha}^\star = \gamma\hat{V}_I(f(\hat{x}, \hat{u}^\star))$, otherwise the epigraph constraints \eqref{eq:OSPStageCost} and \eqref{eq:OSPLB} are either violated, or are not binding.
\end{proof}

%The condition $u \in \mathcal{U}(\hat{x})$ is reflected in constraint \eqref{eq:OSPConstr}, and constraint \eqref{eq:OSPDyn} can be eliminated by substituting the definition of $x_+$ into constraint \eqref{eq:OSPLB}. It is immediate that $\mathbf{1}^\top \hat{\beta}^\star = \sum_{k=1}^K \hat{\beta}^\star_k$, where to be feasible and optimal each $\hat{\beta}^\star_k = \max_j \{\phi_j(\hat{x}) + r_j^\top \hat{u}^\star + \tfrac{1}{2}\hat{u}^{\star\top} R_j \hat{u}^\star \}$ for those entries $j$ corresponding to the $k^\text{th}$ summand in definition \eqref{eq:ResStageCost}. Therefore it follows that $\mathbf{1}^\top \hat{\beta}^\star = \ell(\hat{x},\hat{u}^\star)$. If $\hat{\alpha}^\star > \max_i g_i(\hat{x}^\star_+) = \max_i g_i(f(\hat{x},\hat{u}^\star))$ there must be an improving direction for $\alpha$, and if $\exists i: \hat{\alpha}^\star < g_i(\hat{x}^\star_+)$ then $\hat{\alpha}^\star$ is infeasible. Therefore $\gamma\hat{\alpha}^\star = \gamma\hat{V}_I(f(\hat{x}, \hat{u}^\star))$.
%\end{proof}

\subsection{Lower-bounding lemma} \label{sec:LBLemma}

We now derive the dual of problem \eqref{eq:OSP} in order to support the lemma that follows. Assign the multipliers $\nu \in \mathbb{R}^n$ to constraint \eqref{eq:OSPDyn}, $\lambda_c \in \mathbb{R}_+^{n_c}$ to constraint \eqref{eq:OSPConstr}, $\lambda_\beta \in \mathbb{R}_+^J$ to constraint \eqref{eq:OSPStageCost}, and $\lambda_\alpha \in \mathbb{R}_+^{I+1}$ to constraint \eqref{eq:OSPLB}. We use $\lambda_{\beta, j}$ to denote the $j^\text{th}$ element of $\lambda_\beta$, and $\lambda_{\alpha, i}$ to denote the $i^\text{th}$ element of $\lambda_\alpha$. The Lagrangian of \eqref{eq:OSP} is, after grouping terms,
{\allowdisplaybreaks
\begin{align}\label{eq:OSPLag}
%& \mathcal{L}(u,x_+,\beta,\alpha,\nu,\lambda_c,\lambda_\beta,\lambda_\alpha) :=  \mathbf{1}^\top \beta + \gamma \alpha \nonumber \\ %+ \mathbb{I}_{\mathcal{R}(\hat{x})}(x_+) \nonumber \\
%& + \nu^\top(f_x(\hat{x}) + F_u(\hat{x})u - x_+) + \lambda_c^\top(Eu - h(\hat{x})) \nonumber \\
%& + \sum_{j=1}^J \lambda_{\beta,j}\left(\phi_j(\hat{x}) + r_j^\top u + \tfrac{1}{2}u^\top R_j u - e_j^\top \beta\right) \nonumber \\
%& + \sum_{i=0}^I \lambda_{\alpha,i} \left(g_i(x_+) - \alpha\right) \nonumber \\
& \mathcal{L}(u,x_+,\beta,\alpha,\nu,\lambda_c,\lambda_\beta,\lambda_\alpha) := \nonumber \\
 & \, (\mathbf{1} - L^\top \lambda_\beta)^\top \beta + (\gamma - \mathbf{1}^\top \lambda_\alpha)\alpha - \nu^\top x_+ + \sum_{i=0}^I \lambda_{\alpha,i} g_i(x_+) \nonumber \\
& + (\nu^\top F_u(\hat{x}) + \lambda_c^\top E + \lambda_\beta^\top \overline{R})u + \frac{1}{2}u^\top \left( \sum_{j=1}^J \lambda_{\beta, j} R_j \right)u \nonumber \\
& + \nu^\top f_x(\hat{x}) - \lambda_c^\top h(\hat{x}) + \lambda_\beta^\top \phi(\hat{x}) \, ,
\end{align}}
where for compactness we have introduced the additional symbols $L \in \mathbb{R}^{J \times K}$, $\overline{R} \in \mathbb{R}^{J \times m}$ and $\phi(\hat{x}) \in \mathbb{R}^J$, whose $j^\text{th}$ rows contain $e_j^\top$, $r_j^\top$, and $\phi_j(\hat{x})$ respectively. The dual of problem \eqref{eq:OSP} is:
\begin{subequations}\label{eq:OSPD}
\begin{align}
\sup_{\nu, \lambda_c, \lambda_\beta, \lambda_\alpha} \quad & \phi(\hat{x})^\top \lambda_\beta - h(\hat{x})^\top \lambda_c + f_x(\hat{x})^\top\nu \nonumber \\ & \hspace{1.5cm} + \zeta_1(\nu, \lambda_\alpha) + \zeta_2(\hat{x}, \nu, \lambda_c, \lambda_\beta) \label{eq:OSPDObj}\\
\text{s.~t.}\quad &  L^\top \lambda_\beta = \mathbf{1} \, ,\\
& \mathbf{1}^\top \lambda_\alpha = \gamma \, ,\\
& \lambda_c \geq 0, \,\, \lambda_\beta \geq 0, \,\, \lambda_\alpha \geq 0 \, ,
\end{align}
\end{subequations}
where 
\begin{equation}
\zeta_1(\nu, \lambda_\alpha) :=  \inf_{x_+ \in \mathcal{X}} \left\{-\nu^\top x_+ + \sum_{i=0}^I \lambda_{\alpha,i} g_i(x_+) \right\} \label{eq:Delta1Def} %\nonumber \\ + \mathbb{I}_{\mathcal{R}(\hat{x})}(x_+) 
%= & \min_{x_+ \in \mathcal{R}(\hat{x})} \left\{-\nu^\top x_+ + \sum_{i=0}^I \lambda_{\alpha,i} g_i(x_+) \right\}  
\end{equation}
and
\begin{align}
& \zeta_2(\hat{x}, \nu, \lambda_c, \lambda_\beta) := \nonumber \\ & \inf_{u \in \mathcal{U}} \! \left\{ \!\left(\nu^\top F_u(\hat{x}) + \lambda_c^\top E + \lambda_\beta^\top \overline{R}\right)u + \frac{1}{2}u^\top \!\! \left( \sum_{j=1}^J \lambda_{\beta, j} R_j \!\right) \!\!u \! \right\}\!. \label{eq:Delta2Def}
\end{align}
Terms $\zeta_1$ and $\zeta_2$ appear in the dual objective as a standard consequence of minimizing the Lagrangian over primal variables. The requirement for the two minimizations to be bounded from below (which ensures the dual function is meaningful) may place additional implicit constraints on the dual variables in \eqref{eq:OSPD}. We do not write these constraints out explicitly, because they do not always apply and their form is not in fact relevant. 

It will, however, be crucial for the Benders-type argument of Lemma \ref{lem:NewLB} below that any extra implicit constraints in \eqref{eq:OSPD} are invariant to $\hat{x}$. Thus \eqref{eq:Delta2Def} could be problematic if the minimization is unbounded for some $\hat{x}$. If all matrices $R_j$ are strictly positive definite then $\zeta_2(\hat{x},\nu,\lambda_c,\lambda_\beta)$ has a finite analytic value for any $\hat{x}$, and no extra constraint coupling the variables $\hat{x}$, $\nu$, $\lambda_c$, and $\lambda_\beta$ is needed. If there is an $R_j$ which is \emph{not} strictly positive definite, then one needs $F_u(\hat{x})^\top \nu + E^\top\lambda_c + \overline{R}^\top \lambda_\beta$ to lie in the span of the eigenvectors of $\sum_{j=1}^J \lambda_{\beta, j} R_j$ with strictly positive eigenvalues. This span constraint must then be invariant to $\hat{x}$, which is satisfied if $F_u(\hat{x})$ is a constant. These issues justify Restriction \ref{res:Dynamics}, which is sufficient to ensure that constraint invariance holds.

Let $J_P(\hat{x})$ denote the optimal objective value of problem \eqref{eq:OSP} as a function of the parameter $\hat{x}$, and similarly define $J_D(\hat{x})$ as the optimal objective value of problem \eqref{eq:OSPD}. We now state the lemma on which our proposed algorithm is based.

\begin{lem}[Lower-bounding lemma] \label{lem:NewLB}
Suppose $g_i(x) \leq V^\star(x), \, \forall x \in \mathcal{X}$, for $i=0,\ldots,I$. Assume that optimal dual variables exist for problem \eqref{eq:OSPD} with parameter $\hat{x}$, and denote these $(\hat{\nu}^\star, \hat{\lambda}_c^\star, \hat{\lambda}_\beta^\star, \hat{\lambda}_\alpha^\star)$. Then the following relationship holds:
\begin{align} \label{eq:gIplus1}
g_{I+1}(x) & := \hat{\lambda}^{\star\top}_\beta \phi(x)  - \hat{\lambda}^{\star\top}_c h(x) + \hat{\nu}^{\star\top}f_x(x) \nonumber \\
& \hspace{1.5cm} + \zeta_1(\hat{\nu}^\star, \hat{\lambda}^\star_\alpha) + \zeta_2(x, \hat{\nu}^\star, \hat{\lambda}^\star_c, \hat{\lambda}^\star_\beta) \nonumber \\ & \leq V^\star(x) \, , \, \forall x \in \mathcal{X} \, .
\end{align}
\end{lem}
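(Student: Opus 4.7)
The plan is to run a Benders-style parametric duality argument: we will show that the optimal dual multipliers computed at the particular parameter $\hat{x}$ remain \emph{dual-feasible} for the one-stage problem when its parameter is changed to any other state $x$, whence the dual objective evaluated at these fixed multipliers provides a lower bound on $J_D(x)$, and then weak duality together with monotonicity of $\mathcal{T}$ closes the loop back to $V^\star(x)$.

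Concretely, first I would examine problem \eqref{eq:OSPD} and observe that the three explicit dual constraints $L^\top \lambda_\beta = \mathbf{1}$, $\mathbf{1}^\top \lambda_\alpha = \gamma$, and $\lambda_c,\lambda_\beta,\lambda_\alpha \geq 0$ do not involve the parameter. Next, I would argue that the only constraints implicit in the definition of $\zeta_1$ and $\zeta_2$ also do not depend on the parameter: $\zeta_1(\nu,\lambda_\alpha)$ is an infimum over $x_+$ of an expression whose data (namely $\nu$ and the fixed $g_i$'s) is independent of $\hat{x}$; and $\zeta_2(\hat{x},\nu,\lambda_c,\lambda_\beta)$ is bounded below for all $\hat{x}$ precisely under Restriction \ref{res:Dynamics}, because in case (a) strict positive definiteness of the $R_j$ removes any such requirement, while in case (b) the matrix multiplying $u$ in the linear term is $B^\top \nu + E^\top \lambda_c + \overline{R}^\top \lambda_\beta$, in which $\hat{x}$ does not appear. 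This is the step I expect to be the main obstacle, and it is precisely the reason Restriction \ref{res:Dynamics} was imposed. Thus $(\hat{\nu}^\star,\hat{\lambda}_c^\star,\hat{\lambda}_\beta^\star,\hat{\lambda}_\alpha^\star)$ is dual-feasible for problem \eqref{eq:OSPD} instantiated at any new parameter $x \in \mathcal{X}$.

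With dual feasibility in hand, the right-hand side of the definition of $g_{I+1}(x)$ in \eqref{eq:gIplus1} is simply the dual objective \eqref{eq:OSPDObj}, evaluated at parameter $x$ with the multipliers fixed to their optimal values at $\hat{x}$. Therefore
\begin{equation*}
g_{I+1}(x) \leq J_D(x) \leq J_P(x), \quad \forall x \in \mathcal{X},
\end{equation*}
where the second inequality is weak duality for problem \eqref{eq:OSP}/\eqref{eq:OSPD}.

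Finally, I would invoke Lemma \ref{lem:OSPTV} to identify $J_P(x) = \mathcal{T}\hat{V}_I(x)$, and then use the hypothesis $g_i(x) \leq V^\star(x)$ for $i=0,\ldots,I$ to conclude $\hat{V}_I(x) = \max_i g_i(x) \leq V^\star(x)$ pointwise. Monotonicity of the Bellman operator $\mathcal{T}$ (an immediate consequence of its definition \eqref{eq:TVDef} as an infimum of an expression that is nondecreasing in $V$), together with the Bellman optimality condition \eqref{eq:BellOpt}, then gives
\begin{equation*}
\mathcal{T}\hat{V}_I(x) \leq \mathcal{T}V^\star(x) = V^\star(x), \quad \forall x \in \mathcal{X}.
\end{equation*}
Chaining these inequalities yields $g_{I+1}(x) \leq V^\star(x)$ for all $x \in \mathcal{X}$, as required.
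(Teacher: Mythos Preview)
Your proposal is correct and follows essentially the same route as the paper's proof: both argue that the dual feasible set of \eqref{eq:OSPD} is invariant to the parameter (invoking Restriction~\ref{res:Dynamics} for the implicit constraints coming from $\zeta_2$), deduce $g_{I+1}(x)\le J_D(x)$ by suboptimality of the fixed multipliers, apply weak duality to reach $J_P(x)$, identify $J_P(x)=\mathcal{T}\hat{V}_I(x)$ via Lemma~\ref{lem:OSPTV}, and close with monotonicity of $\mathcal{T}$ and the Bellman condition \eqref{eq:BellOpt}. The paper additionally remarks that the inequality $g_{I+1}(x)\le J_D(x)$ survives when the supremum in \eqref{eq:OSPD} at the new parameter is not attained or is $+\infty$, a point your argument covers implicitly but which you may wish to state.
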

\begin{proof}
Consider solving problem \eqref{eq:OSPD} for some other $\overline{x} \neq \hat{x}$. If the supremum is attained there, let \mbox{$(\overline{\nu}^\star, \overline{\lambda}_c^\star, \overline{\lambda}_\beta^\star, \overline{\lambda}_\alpha^\star)$} denote an optimal solution. Then
\begin{subequations}
\begin{align}
J_D(\overline{x}) & = \phi(\overline{x})^\top \overline{\lambda}^\star_\beta  - h(\overline{x})^\top \overline{\lambda}^\star_c + f_x(\overline{x})^\top\overline{\nu}^\star \nonumber \\
& \qquad \qquad + \zeta_1(\overline{\nu}^\star, \overline{\lambda}^\star_\alpha) + \zeta_2(\overline{x}, \overline{\nu}^\star, \overline{\lambda}^\star_c, \overline{\lambda}^\star_\beta) \label{eq:JDxOpt} \\
& \geq \phi(\overline{x})^\top \hat{\lambda}^\star_\beta - h(\overline{x})^\top \hat{\lambda}^\star_c + f_x(\overline{x})^\top\hat{\nu}^\star \nonumber \\
& \qquad \qquad + \zeta_1(\hat{\nu}^\star, \hat{\lambda}^\star_\alpha) + \zeta_2(\overline{x}, \hat{\nu}^\star, \hat{\lambda}^\star_c, \hat{\lambda}^\star_\beta) \, , \label{eq:JDxLB}
\end{align}
\end{subequations}
where line \eqref{eq:JDxOpt} is simply the definition of an optimal dual solution, and \eqref{eq:JDxLB} follows trivially as $(\hat{\nu}^\star, \hat{\lambda}_c^\star, \hat{\lambda}_\beta^\star, \hat{\lambda}_\alpha^\star)$ cannot be better than any optimal solution. Under Restriction \ref{res:Dynamics} the feasible sets of problem \eqref{eq:OSPD} are the same for parameters $\hat{x}$ and $\overline{x}$, and therefore this inequality always holds. 

Even if the supremum in \eqref{eq:OSPD} for parameter $\overline{x}$ is not attained, or the optimal objective value is infinite, i.e.~$J_D(\overline{x}) = +\infty$, then the inequality between $J_D(\overline{x})$ and \eqref{eq:JDxLB} still holds.

Now for any $\overline{x}$, the following relationships also hold:
\begin{equation} \label{eq:Lemma1BellmanSeries}
J_P(\overline{x}) = \mathcal{T}\hat{V}_I(\overline{x}) \leq \mathcal{T}V^\star(\overline{x}) = V^\star(\overline{x}) \, ,
\end{equation}
where the left-hand equality arises from Lemma \ref{lem:OSPTV}, the central inequality comes from monotonicity of the Bellman operator \cite[Lemma 1.1.1]{bertsekas_dynamic_2012} and the fact that $\hat{V}_I(\overline{x}) \leq V^\star(\overline{x})$, and the right-hand equality comes from the Bellman optimality condition \eqref{eq:BellOpt}. 

Furthermore, from weak duality, $J_D(\overline{x}) \leq J_P(\overline{x})$. Therefore, referring to any $\overline{x}$ simply as $x$, and combining \eqref{eq:JDxLB}, \eqref{eq:JDxOpt}, weak duality, and \eqref{eq:Lemma1BellmanSeries}, the result follows. Note that definition \eqref{eq:gIplus1} is the same as \eqref{eq:JDxLB}; we have transposed the first three terms as the fixed Lagrange multipliers appear as coefficients for functions of $x$ in the algorithm.
\end{proof}

\begin{rem}[Alternative form of $g_{I+1}(x)$]
An equivalent definition of $g_{I+1}(x)$, using $J_D(\hat{x})$ and difference terms between $x$ and $\hat{x}$, is
\begin{align} \label{eq:gIplus1Alt}
g_{I+1}(x) & = J_D(\hat{x}) + \hat{\lambda}^{\star\top}_\beta(\phi(x) - \phi(\hat{x})) - \hat{\lambda}^{\star\top}_c(h(x) - h(\hat{x})) \nonumber \\
& \qquad\qquad + \hat{\nu}^{\star\top} (f_x(x) - f_x(\hat{x})) \nonumber \\
& \qquad\qquad + \zeta_2(x, \hat{\nu}^\star, \hat{\lambda}^\star_c, \hat{\lambda}^\star_\beta) - \zeta_2(\hat{x}, \hat{\nu}^\star, \hat{\lambda}^\star_c, \hat{\lambda}^\star_\beta) \, .
\end{align}
In case strong duality holds between problems \eqref{eq:OSP} and \eqref{eq:OSPD}, $J_P(\hat{x})$ can be used in place of $J_D(\hat{x})$, and the new function can be expressed in terms of the optimal objective value and Lagrange multipliers from problem \eqref{eq:OSP}.
\end{rem}

\subsection{Generalized DDP algorithm}

Lemma \ref{lem:NewLB} states that if functions $g_i(x)$ for $i=0,\ldots,I$ are all known to be global lower bounds on $V^\star(x)$, then a new lower bound $g_{I+1}(x)$ can be generated from the solution of dual problem \eqref{eq:OSPD} with parameter $\hat{x}$, as long as that problem attains a finite optimum. Algorithm \ref{alg:GDDP} constructs a series of valid lower-bounding functions based on this result, starting with $g_0(x) = 0$, by solving the one-stage problem at multiple pre-selected points in the state space $\mathcal{X}$, the set of which we denote $\xalgm := \{x_1, \ldots, x_M\}$. Defining the \textit{Bellman error} $\varepsilon(x; V)$ for any approximate value function $V$ and state $x$ as 
\begin{equation} \label{eq:EpsDef}
\varepsilon(x;V) :=  \mathcal{T}V(x) - V(x) \, ,
\end{equation}
the algorithm systematically approaches the condition $\varepsilon(x_m, \hat{V}_I) = 0$ for all $x_m \in \xalgm$. Denoting $\varepsilon_I \in \mathbb{R}^M$ the vector of such Bellman errors at iteration $I$, the algorithm terminates when $||\epsilon_I||_\infty \leq \delta$, and outputs a final approximation $\hat{V}(x)$.\footnote{For the purpose of assessing convergence, we let $\varepsilon(x_m, \hat{V}_I) = 0$ by convention for any $x_m$ where the one-stage problem is infeasible.}

In the algorithm listing, the function \xpicker chooses an element of \xalg to use as the parameter $\hat{x}$ when solving the one-stage problem \eqref{eq:OSP} (or its explicit dual \eqref{eq:OSPD}). Strategies for selecting $\hat{x}$ are described in Section \ref{sec:ChoiceXBeq}. The function $\onestage$ performs this optimization and returns, if optimal dual variables are available, the new lower-bounding function $g_{I+1}(x)$ as described in Lemma \ref{lem:NewLB}. 

\begin{algorithm}[t]
\caption{Generalized Dual Dynamic Programming algorithm for $M$ fixed state space points} 
\label{alg:GDDP}
\begin{algorithmic}[1]
\STATE Generate samples $\xalgm := \{x_1, \ldots, x_M\}$
\STATE Set $I = 0$
\STATE Set $g_0(x) = 0$
\WHILE{TRUE}
\STATE $\hat{V}_I(x) \gets \max_{i=0,\ldots,I} g_i(x)$
\FOR{$x_m \in \xalgm$} \label{algl:BellGapStart}
\STATE $\varepsilon(x_m; \hat{V}_I) \gets \mathcal{T}\hat{V}_I(x_m) - \hat{V}_I(x_m)$
\ENDFOR \label{algl:BellGapEnd}
\IF{$||\varepsilon_I||_\infty \leq \delta$} \label{algl:ConvCheck}
\STATE \textbf{break}
\ENDIF
\STATE $\hat{x} \gets$ \xpicker 
\IF{$\onestage$ is feasible}
\STATE $g_{I+1}(x) \gets \onestage$ according to \eqref{eq:gIplus1} \label{algl:AddLB}
\ELSE
\STATE $V^\star(\hat{x}) = +\infty$; do not revisit $\hat{x}$
\ENDIF
\STATE $I \gets I+1$
\ENDWHILE
\STATE Output $\hat{V}(x) := \max_{i=0,\ldots,I} g_i(x)$
\end{algorithmic}
\end{algorithm}

We now prove some key properties of Algorithm \ref{alg:GDDP}.

\begin{lem}[Positive Bellman error] \label{lem:VlessthanTV}
For the value function under-approximator $\hat{V}_I(x)$ generated by Algorithm \ref{alg:GDDP} at each iteration $I$, 
\begin{equation} \label{eq:VlessthanTV}
\varepsilon(x;\hat{V}_I) \geq 0 \, , \quad \forall x \in \mathcal{X} \, .
\end{equation}
\end{lem}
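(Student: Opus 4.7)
The plan is to trace carefully the chain of inequalities already present inside the proof of Lemma \ref{lem:NewLB} and observe that it gives \emph{more} than the conclusion stated there: namely, it shows that each newly generated $g_{I+1}$ is pointwise dominated not only by $V^\star$ but also by $\mathcal{T}\hat{V}_I$, the Bellman operator applied to the approximation that was current at the time $g_{I+1}$ was produced.

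\textbf{Step 1 (sharper form of Lemma \ref{lem:NewLB}).} First I would revisit the chain used to prove Lemma \ref{lem:NewLB}. By construction, $g_{I+1}(x)$ is exactly the right-hand side of \eqref{eq:JDxLB}, which is bounded above by $J_D(x)$, which by weak duality is bounded above by $J_P(x) = \mathcal{T}\hat{V}_I(x)$. Thus for any $i \geq 1$,
\begin{equation*}
g_i(x) \;\leq\; \mathcal{T}\hat{V}_{i-1}(x), \qquad \forall x \in \mathcal{X}.
\end{equation*}

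\textbf{Step 2 (propagate via monotonicity).} Because $\hat{V}_I = \max_{j=0,\ldots,I} g_j$ is a pointwise maximum, the sequence $\{\hat{V}_j\}_{j \geq 0}$ is nondecreasing in $j$, so $\hat{V}_{i-1}(x) \leq \hat{V}_I(x)$ whenever $i \leq I$. Monotonicity of $\mathcal{T}$ \cite[Lemma 1.1.1]{bertsekas_dynamic_2012} then yields
\begin{equation*}
g_i(x) \;\leq\; \mathcal{T}\hat{V}_{i-1}(x) \;\leq\; \mathcal{T}\hat{V}_I(x), \qquad i = 1,\ldots,I.
\end{equation*}

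\textbf{Step 3 (handle $i=0$).} For $g_0 \equiv 0$, I would use that $\hat{V}_I(x) \geq 0$ (since $g_0$ is in the max) together with non-negativity of the stage cost from Assumption \ref{ass:UniqueVStar} to conclude $\mathcal{T}\hat{V}_I(x) \geq 0 = g_0(x)$; the case where $\mathcal{U}(x)$ is empty is handled by the convention $\mathcal{T}\hat{V}_I(x) = +\infty$, which still dominates $0$.

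\textbf{Step 4 (combine).} Taking the max over $i = 0,\ldots,I$ gives $\hat{V}_I(x) = \max_{i} g_i(x) \leq \mathcal{T}\hat{V}_I(x)$, which rearranges to $\varepsilon(x;\hat{V}_I) \geq 0$ as desired. The main conceptual step is Step 1: recognizing that the Benders-style construction in Lemma \ref{lem:NewLB} already implicitly bounds $g_{I+1}$ by $\mathcal{T}\hat{V}_I$, not just by $V^\star$. Everything else is monotonicity of $\mathcal{T}$ combined with the fact that a pointwise-max lower bound is nondecreasing as new cuts are appended. No induction is strictly necessary, since the bound on each $g_i$ holds with respect to $\hat{V}_{i-1}$, which is itself dominated by $\hat{V}_I$.
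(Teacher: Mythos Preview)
Your proposal is correct and follows essentially the same approach as the paper's proof: both extract the sharper bound $g_i(x) \leq \mathcal{T}\hat{V}_{i-1}(x)$ from the weak-duality chain inside Lemma~\ref{lem:NewLB}, propagate it up to $\mathcal{T}\hat{V}_I(x)$ via monotonicity of $\mathcal{T}$ and the nondecreasing sequence $\{\hat{V}_j\}$, handle $g_0 \equiv 0$ separately using non-negativity of the stage cost, and then take the pointwise maximum. The organization differs only cosmetically (the paper separates the cases $I=0$ and $I>0$, whereas you split by $i=0$ versus $i \geq 1$), and your explicit remark about the infeasible case $\mathcal{U}(x) = \emptyset$ is a welcome clarification.
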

\begin{proof}
For $I = 0$, the result holds trivially since $\hat{V}_0(x) = g_0(x) = 0$, and $\mathcal{T}\hat{V}_0(x)$ is non-negative for any $x$ by virtue of non-negative stage costs $\ell(x,u)$. 

For any $I > 0$ we have, from definition \eqref{eq:VHatDef}, $\hat{V}_{I}(x) \geq \hat{V}_{I-1}(x)$ for all $x \in \mathcal{X}$, and therefore, by monotonicity of the Bellman operator, $\mathcal{T}\hat{V}_{I}(x) \geq \mathcal{T}\hat{V}_{I-1}(x)$ for all $x \in \mathcal{X}$. Applying the proof of Lemma \ref{lem:NewLB} to iteration $I-k$, for any $k=0,\ldots,I-1$, we have \[g_{I-k}(x) \leq J_D(x) \leq J_P(x) = \mathcal{T}\hat{V}_{I-k-1}(x), \quad \forall x \in \mathcal{X} \, .\] Since $\mathcal{T}\hat{V}_{I-k-1}(x) \leq \mathcal{T}\hat{V}_{I-k}(x) \leq \ldots \leq \mathcal{T}\hat{V}_I(x)$ we have \[ \mathcal{T}\hat{V}_I(x) \geq g_i(x)\, , \quad \forall x \in \mathcal{X}, \,\, \forall i = 0,\ldots,I,\]
from which the result is immediate by the definitions of $\hat{V}_I(x)$ and $\varepsilon(x;\hat{V}_I)$.
\end{proof}

\begin{lem}[Strict increase in value function approximator]\label{lem:VFIncrease}
Suppose $\hat{V}_I(\hat{x}) < \mathcal{T}\hat{V}_I(\hat{x})$ for some $\hat{x} \in \mathcal{X}$, and that $\hat{x}$ is chosen as the evaluation point in iteration $I+1$ of Algorithm \ref{alg:GDDP}. If strong duality holds between problems \eqref{eq:OSP} and \eqref{eq:OSPD}, then this iteration brings about a strict increase in the value function approximation at $\hat{x}$, i.e., $\hat{V}_{I+1}(\hat{x}) > \hat{V}_I(\hat{x})$.  The increase is equal to $\mathcal{T}\hat{V}_I(\hat{x}) - \hat{V}_I(\hat{x})$, i.e.,
\begin{equation} \label{eq:LemStrictIncr}
\hat{V}_{I+1}(\hat{x}) - \hat{V}_I(\hat{x}) = \mathcal{T}\hat{V}_I(\hat{x}) - \hat{V}_I(\hat{x}) > 0 \, .
\end{equation}
\end{lem}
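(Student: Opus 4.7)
The plan is to show that evaluating the new lower-bounding function $g_{I+1}$ at the chosen sample point $\hat{x}$ exactly reproduces the Bellman update $\mathcal{T}\hat{V}_I(\hat{x})$, so that the pointwise maximum \eqref{eq:VHatDef} defining $\hat{V}_{I+1}$ jumps from $\hat{V}_I(\hat{x})$ up to $\mathcal{T}\hat{V}_I(\hat{x})$. In other words, the heart of the argument is to identify $g_{I+1}(\hat{x})$ with $\mathcal{T}\hat{V}_I(\hat{x})$; everything else is bookkeeping with the definition of the pointwise maximum.

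First I would substitute $x = \hat{x}$ into the definition \eqref{eq:gIplus1} of $g_{I+1}$ and observe that, since $\hat{x}$ is precisely the parameter used to solve \eqref{eq:OSPD}, the resulting expression coincides term-by-term with the dual objective \eqref{eq:OSPDObj} evaluated at the optimal dual solution $(\hat{\nu}^\star, \hat{\lambda}_c^\star, \hat{\lambda}_\beta^\star, \hat{\lambda}_\alpha^\star)$. Hence $g_{I+1}(\hat{x}) = J_D(\hat{x})$. Invoking the strong-duality hypothesis gives $J_D(\hat{x}) = J_P(\hat{x})$, and Lemma \ref{lem:OSPTV} in turn equates $J_P(\hat{x})$ with $\mathcal{T}\hat{V}_I(\hat{x})$. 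Chaining these identities yields $g_{I+1}(\hat{x}) = \mathcal{T}\hat{V}_I(\hat{x})$.

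Next I would apply the pointwise-maximum definition at $\hat{x}$: $\hat{V}_{I+1}(\hat{x}) = \max\{\hat{V}_I(\hat{x}),\, g_{I+1}(\hat{x})\} = \max\{\hat{V}_I(\hat{x}),\, \mathcal{T}\hat{V}_I(\hat{x})\}$. Since the hypothesis is precisely $\hat{V}_I(\hat{x}) < \mathcal{T}\hat{V}_I(\hat{x})$, the maximum attains its second argument, giving $\hat{V}_{I+1}(\hat{x}) = \mathcal{T}\hat{V}_I(\hat{x})$ and therefore \eqref{eq:LemStrictIncr}.

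I do not expect a serious obstacle here; the proof is essentially a one-line collapse of definitions followed by a primal--dual equivalence. The one subtlety worth noting is that the strong-duality hypothesis implicitly excludes the pathological case $\mathcal{T}\hat{V}_I(\hat{x}) = +\infty$ (whether via primal infeasibility or unboundedness of \eqref{eq:OSPD}), which is exactly the case in which Algorithm \ref{alg:GDDP} would not have added a cut at this iteration; so no separate verification of feasibility of $\onestage$ is required beyond what strong duality already supplies.
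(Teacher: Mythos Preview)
Your proposal is correct and follows essentially the same route as the paper's proof: both argue that $g_{I+1}(\hat{x}) = J_D(\hat{x}) = J_P(\hat{x}) = \mathcal{T}\hat{V}_I(\hat{x})$ via strong duality and Lemma~\ref{lem:OSPTV}, then use $\hat{V}_{I+1}(\hat{x}) = \max\{\hat{V}_I(\hat{x}), g_{I+1}(\hat{x})\}$ together with the hypothesis $\hat{V}_I(\hat{x}) < \mathcal{T}\hat{V}_I(\hat{x})$. The only cosmetic difference is that the paper invokes the alternative form \eqref{eq:gIplus1Alt} (where the difference terms vanish at $x=\hat{x}$) to read off $g_{I+1}(\hat{x}) = J_D(\hat{x})$, whereas you obtain the same identity by direct substitution into \eqref{eq:gIplus1}.
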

\begin{proof}
From definition \eqref{eq:VHatDef} we have \[ \hat{V}_{I+1}(x) = \max \left\{ \hat{V}_I(x), g_{I+1}(x) \right\} \, .\] If strong duality holds between problems \eqref{eq:OSP} and \eqref{eq:OSPD}, i.e., $J_P(\hat{x}) = J_D(\hat{x})$, it follows from \eqref{eq:gIplus1Alt} that $J_P(\hat{x}) = \mathcal{T}\hat{V}_I(\hat{x}) = g_{I+1}(\hat{x})$. Since we suppose $\hat{V}_I(\hat{x}) < \mathcal{T}\hat{V}_I(\hat{x})$, we will have $g_{I+1}(\hat{x}) > \hat{V}_I(\hat{x})$, and hence $\hat{V}_{I+1}(\hat{x}) > \hat{V}_I(\hat{x})$. Equation \eqref{eq:LemStrictIncr} follows immediately. 
\end{proof}

We use Lemmas \ref{lem:NewLB} to \ref{lem:VFIncrease} to prove two convergence results for Algorithm \ref{alg:GDDP}.

\begin{thm}[Pointwise convergence of $\hat{V}_I(x)$ as $I \rightarrow \infty$] \label{thm:VLimEverywhere}
For each $x \in \mathcal{X}$ for which $V^\star(x)$ is finite, there exists a limiting value $\hat{V}_{\rm lim}(x) \leq V^\star(x)$ such that $\lim_{I \rightarrow \infty} \hat{V}_I(x) = \hat{V}_{\rm lim}(x)$.
\end{thm}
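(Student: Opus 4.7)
The proof strategy is essentially an application of the monotone convergence theorem for real sequences, so it should be very short.

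First I would observe that for every fixed $x \in \mathcal{X}$, the sequence $\{\hat{V}_I(x)\}_{I=0}^\infty$ is monotonically non-decreasing in $I$. This follows immediately from the definition \eqref{eq:VHatDef}: since $\hat{V}_{I+1}(x) = \max\{\hat{V}_I(x), g_{I+1}(x)\}$, we always have $\hat{V}_{I+1}(x) \geq \hat{V}_I(x)$, regardless of which state $\hat{x}$ was chosen in iteration $I+1$ and regardless of whether the one-stage problem at $\hat{x}$ was feasible (in the infeasible case, no new $g_{I+1}$ is appended, but the pointwise max over the same index set is unchanged).

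Next I would invoke Lemma \ref{lem:NewLB} to obtain the uniform upper bound. By induction on $I$, each $g_i(x) \leq V^\star(x)$ for all $x \in \mathcal{X}$: the base case $g_0(x) = 0 \leq V^\star(x)$ holds by Lemma \ref{lem:VGEZero}, and the inductive step is exactly the content of Lemma \ref{lem:NewLB}, whose hypothesis that all previously generated $g_i$ are lower bounds on $V^\star$ is supplied by the inductive hypothesis. Taking the pointwise maximum preserves the bound, so $\hat{V}_I(x) \leq V^\star(x)$ for all $I$ and all $x \in \mathcal{X}$.

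Restricting attention now to any $x$ with $V^\star(x) < \infty$, the sequence $\{\hat{V}_I(x)\}$ is a non-decreasing sequence of real numbers bounded above by the finite quantity $V^\star(x)$. By the monotone convergence theorem, it converges to a limit
\begin{equation*}
\hat{V}_{\rm lim}(x) := \sup_{I \geq 0} \hat{V}_I(x) \leq V^\star(x),
\end{equation*}
which gives the claim. There is no real obstacle here; the content of the theorem is essentially a bookkeeping consequence of the two earlier facts that (i)~each iteration only enlarges the pointwise maximum, and (ii)~every generated $g_i$ is a globally valid lower bound on $V^\star$. The substantive work has already been done in Lemma \ref{lem:NewLB}; what remains is only to point out that monotone bounded sequences converge.
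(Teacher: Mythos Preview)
Your proposal is correct and follows essentially the same approach as the paper: monotonicity of $\{\hat{V}_I(x)\}$ from the pointwise-maximum definition \eqref{eq:VHatDef}, boundedness above by $V^\star(x)$ via Lemma~\ref{lem:NewLB} (and Lemma~\ref{lem:VGEZero} for the base case), and then the Monotone Convergence Theorem. You are simply a bit more explicit about the induction than the paper's proof.
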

\begin{proof}

From Lemma \ref{lem:NewLB} and \eqref{eq:VHatDef}, we have that for any $x$ with finite optimal value, the sequence $\{\hat{V}_I(x)\}_{I=0}^\infty$ is bounded from above by $V^\star(x)$. Its value is non-decreasing each time a new lower-bounding function $g_{I+1}(x)$ is generated, and therefore the limit $\hat{V}_{\rm lim}(x)$ exists by the Monotone Convergence Theorem.
\end{proof}

\begin{thm}[Finite termination of Algorithm \ref{alg:GDDP}] \label{thm:EpsConv}
Suppose the following conditions are met:
\begin{enumerate}
\item[(i)] Strong duality holds for the one-stage problem \eqref{eq:OSP} with parameter $x_m$ each time it is solved, for all $x_m \in \xalgm$.
\item[(ii)] In the limit as $I \rightarrow \infty$ each $x_m \in \xalgm$ is picked by \xpicker with strictly positive probability at each iteration. 
\item[(iii)] Each $V^\star(x_m)$ is finite.
\end{enumerate}
Then Algorithm \ref{alg:GDDP} converges in a finite number of iterations with probability $1$ for any tolerance $\delta > 0$.
\end{thm}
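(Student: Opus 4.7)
The plan is a proof by contradiction that combines a deterministic (per-sample) budget on ``useful'' evaluations with a Borel--Cantelli argument leveraging condition (ii). Since $\xalgm$ is finite, the algorithm fails to stop at iteration $I$ precisely when some $x_m \in \xalgm$ has $\varepsilon(x_m;\hat{V}_I) > \delta$. By pigeonhole, non-termination with positive probability implies the existence of an index $m$ for which the event
\begin{equation*}
A_m := \{\varepsilon(x_m;\hat{V}_I) > \delta \text{ for infinitely many } I\}
\end{equation*}
has positive probability. It therefore suffices to show $P(A_m) = 0$ for each $m$ and take a union bound over the $M$ samples.

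For the deterministic budget, I would invoke Lemma \ref{lem:VFIncrease}: whenever $\hat{x} = x_m$ is the point selected at iteration $I+1$, condition (i) (strong duality) yields the exact increment $\hat{V}_{I+1}(x_m) - \hat{V}_I(x_m) = \varepsilon(x_m;\hat{V}_I)$. Hence every selection of $x_m$ at an iteration with Bellman error exceeding $\delta$ contributes strictly more than $\delta$ to the monotone non-decreasing sequence $\{\hat{V}_I(x_m)\}_{I\geq 0}$, which by Lemma \ref{lem:NewLB} and condition (iii) is bounded above by the finite value $V^\star(x_m)$. Summing increments shows that the number of such ``useful'' selections at $x_m$ is pathwise strictly less than $V^\star(x_m)/\delta$, hence finite.

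For the probabilistic half, I would apply the conditional (L\'evy) form of the second Borel--Cantelli lemma to the events ``$\hat{x} = x_m$ is chosen at iteration $I+1$,'' restricted to the random subsequence of iterations on which $\varepsilon(x_m;\hat{V}_I) > \delta$. Condition (ii) supplies, in the tail, a uniform lower bound $p^* > 0$ on the conditional selection probabilities given the history, so on $A_m$ the sum of conditional probabilities along this infinite random subsequence is itself infinite; L\'evy's lemma then forces infinitely many selections to occur almost surely on $A_m$. This contradicts the deterministic budget, so $P(A_m) = 0$. A union bound over $m = 1,\ldots,M$ then yields, with probability $1$, that the criterion $\|\varepsilon_I\|_\infty \leq \delta$ is eventually satisfied and Algorithm \ref{alg:GDDP} terminates.

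The main obstacle I anticipate is the probabilistic step: XPicker's selection distribution may depend on the algorithm's entire history, so the selection events are not unconditionally independent of whether the Bellman error currently exceeds $\delta$. Using the conditional extension of Borel--Cantelli sidesteps this cleanly, but it does require sharpening condition (ii) into the statement that the conditional probability of picking $x_m$ is bounded below by a common $p^* > 0$ for all sufficiently large $I$ almost surely -- something natural for the sampling schemes in Section \ref{sec:ChoiceXBeq} but worth stating explicitly. Beyond that, the proof relies only on Lemma \ref{lem:NewLB} (upper bound by $V^\star$), Lemma \ref{lem:VFIncrease} (exact increment under strong duality), and Theorem \ref{thm:VLimEverywhere} (monotone pointwise limit), all of which are already in hand.
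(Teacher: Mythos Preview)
Your argument is correct and proceeds differently from the paper's. The paper uses a Cauchy-sequence argument: by Theorem~\ref{thm:VLimEverywhere} each sequence $\{\hat V_I(x_m)\}_I$ converges, so there is an index $N(x_m,\delta)$ beyond which all later increments are below $\delta$; condition (ii) then ensures $x_m$ is almost surely picked at some $I>N(x_m,\delta)$, and at that pick Lemma~\ref{lem:VFIncrease} identifies the increment with $\varepsilon(x_m;\hat V_I)$, forcing it below $\delta$. You instead replace convergence with a finite-budget count---the number of picks of $x_m$ at which the error exceeds $\delta$ is at most $V^\star(x_m)/\delta$---and invoke L\'evy's conditional Borel--Cantelli lemma to show that on $A_m$ such picks would occur infinitely often, a contradiction. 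Your probabilistic bookkeeping is more explicit than the paper's, which leaves ``$x_m$ will, with probability~1, be picked at some iteration $I>N$'' informal and does not spell out how the per-$m$ conclusions combine into a single iteration at which all $M$ errors are simultaneously small; your pathwise pigeonhole reduction to a single index $m$ sidesteps that issue cleanly. The sharpening of (ii) to a uniform conditional lower bound $p^\star>0$ that you flag is in fact what both arguments implicitly need. One minor note: Theorem~\ref{thm:VLimEverywhere} is not actually required in your route---the budget argument uses only monotonicity and the upper bound $\hat V_I(x_m)\le V^\star(x_m)$ supplied by Lemma~\ref{lem:NewLB}.
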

\begin{proof}
Application of Theorem \ref{thm:VLimEverywhere} to any $x_m \in \xalgm$ shows that $\lim_{I \rightarrow \infty} \hat{V}_I(x_m)$ exists. Each time $x_m$ is picked by \xpicker\!\!\!, from Lemma \ref{lem:VFIncrease} the value function estimate at $x_m$ increases by an amount equal to $\varepsilon(x_m; \hat{V}_I)$, as long as strong duality holds in problem \eqref{eq:OSP}. 

The convergent sequence $(\hat{V}_0(x_m), \hat{V}_1(x_m), \ldots)$ satisfies necessary conditions for a Cauchy sequence, even though $x_m$ is not chosen by \xpicker at every iteration of the algorithm. Thus, let $N(x_m,\delta)$ denote the finite iteration number beyond which all differences between later elements of the sequence have magnitude less than $\delta$. By assumption, $x_m$ will, with probability $1$, be picked at some iteration $I > N(x_m,\delta)$, at which stage we will have \[\varepsilon(x_m; \hat{V}_I) = \hat{V}_{I+1}(x_m) - \hat{V}_I(x_m) = |\hat{V}_{I+1}(x_m) - \hat{V}_I(x_m)| < \delta\] where the first equality comes from Lemma \ref{lem:VFIncrease}, the second comes from Lemma \ref{lem:VlessthanTV}, and the inequality comes from the definition of a Cauchy sequence. The constant $N(x_m,\delta)$ is different but finite for each $x_m \in \xalgm$. Applying this argument to the largest such value over all points $x_m \in \xalgm$, one deduces that the termination criterion of Algorithm \ref{alg:GDDP} will be satisfied in finite iterations.
\end{proof}

\subsection{Suboptimality of GDDP output} \label{eq:SolutionProperties}
Convergence to a final value function approximation $\hat{V}$ does not imply $\hat{V}(\hat{x}) = V^\star(\hat{x})$ for any given $\hat{x}$, even if one manages to achieve $\varepsilon(\hat{x}; \hat{V}) = 0$. It is therefore desirable to be able to relate $\hat{V}$, the output of Algorithm \ref{alg:GDDP}, to the optimal value function $V^\star$. We now state a lemma which allows us to derive bounds on the difference, supported by the following definition.

For a vector $y$ reachable from $x$ (i.e., for which problem \eqref{eq:OSP} with parameter $x$ remains feasible when augmented with the constraint $x_+ = y$), let $\theta(x,y; \hat{V})$ be the increase in optimal cost when the constraint $x_+ = y$ is added, relative to the original problem \eqref{eq:OSP}. In other words, $\theta(x,y; \hat{V})$ is the (non-negative) cost of artificially constraining the successor state to be $y$ rather than letting it be chosen freely. 

\begin{lem} \label{lem:BellmanEps}
The optimal value function $V^\star$ and approximate value function $\hat{V}$ are related by the following inequality, for any vector pair $(x, y)$ where $y$ is reachable from $x$:
\begin{equation} \label{eq:VstarVhatLemma}
V^\star(x) - \gamma V^\star(y) \leq \hat{V}(x) - \gamma \hat{V}(y) + \theta(x,y; \hat{V}) + \varepsilon(x; \hat{V})
\end{equation}
\end{lem}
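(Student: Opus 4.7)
The plan is to rewrite $\theta(x,y;\hat{V})$ as the gap between a constrained and an unconstrained one-stage optimum, and then use Bellman optimality for $V^\star$ together with the same successor-state restriction to bound $V^\star(x) - \gamma V^\star(y)$ from above by the constrained stage cost alone.

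First I would introduce the shorthand
\begin{equation*}
\ell^\star(x,y) := \inf_{u \in \mathcal{U}(x),\ f(x,u)=y} \ell(x,u),
\end{equation*}
well defined (possibly $+\infty$, but finite here since $y$ is reachable from $x$). Expanding the definition of $\theta$, the one-stage problem \eqref{eq:OSP} with parameter $x$ augmented by $x_+ = y$ has optimal value $\ell^\star(x,y) + \gamma \hat{V}(y)$, while the unconstrained one has optimal value $\mathcal{T}\hat{V}(x)$ by Lemma~\ref{lem:OSPTV}. Hence
\begin{equation*}
\ell^\star(x,y) \;=\; \mathcal{T}\hat{V}(x) + \theta(x,y;\hat{V}) - \gamma \hat{V}(y).
\end{equation*}

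Next I would apply the Bellman optimality condition \eqref{eq:BellOpt} to $V^\star$ at $x$. Because the infimum in $\mathcal{T}V^\star(x)$ is taken over all $u \in \mathcal{U}(x)$, restricting to the (nonempty, by reachability) subset $\{u \in \mathcal{U}(x) : f(x,u) = y\}$ can only raise the value, giving
\begin{equation*}
V^\star(x) \;=\; \mathcal{T}V^\star(x) \;\leq\; \inf_{u \in \mathcal{U}(x),\ f(x,u)=y} \bigl\{ \ell(x,u) + \gamma V^\star(y) \bigr\} \;=\; \ell^\star(x,y) + \gamma V^\star(y).
\end{equation*}
Rearranging and substituting the expression for $\ell^\star(x,y)$ derived above,
\begin{equation*}
V^\star(x) - \gamma V^\star(y) \;\leq\; \mathcal{T}\hat{V}(x) + \theta(x,y;\hat{V}) - \gamma \hat{V}(y).
\end{equation*}

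Finally I would replace $\mathcal{T}\hat{V}(x)$ using the definition $\varepsilon(x;\hat{V}) = \mathcal{T}\hat{V}(x) - \hat{V}(x)$ from \eqref{eq:EpsDef}, which yields exactly \eqref{eq:VstarVhatLemma}. I do not expect a real obstacle here; the only delicate points are confirming that the reachability hypothesis makes $\ell^\star(x,y)$ meaningful (so that the constrained one-stage problem defining $\theta$ is feasible) and that the inequality obtained by restricting the feasible set in the Bellman equation for $V^\star$ is valid regardless of whether the infima are attained, which follows from the elementary fact that the infimum over a subset is at least as large as the infimum over the full set.
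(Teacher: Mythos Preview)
Your proof is correct and follows essentially the same route as the paper: both arguments introduce the minimum stage cost for forcing the transition $x \to y$ (your $\ell^\star(x,y)$ is the paper's $\ell(x,u_{x\to y})$), express it via $\theta$ and $\mathcal{T}\hat{V}(x)$, and then bound $V^\star(x)-\gamma V^\star(y)$ from above by the same quantity using Bellman optimality with the restricted feasible set. Your version is marginally more careful in working with infima rather than assumed minimizers, but the logic is identical.
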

\begin{proof}
Let $u^\star(x; \hat{V})$ be a minimizer in \eqref{eq:PiDef} for value function $\hat{V}$, and let $u_{x \rightarrow y}$ be an input that brings about successor state $y$ at minimum stage cost. From definition \eqref{eq:EpsDef} we have
\begin{subequations}
\allowdisplaybreaks
\begin{align}
\hat{V}(x) & = \mathcal{T}\hat{V}(x) - \varepsilon(x; \hat{V}) \\
& = \ell(x, u^\star(x; \hat{V})) + \gamma \hat{V}(f(x,u^\star(x; \hat{V})) - \varepsilon(x; \hat{V}) \\
& = \ell(x, u_{x \rightarrow y}) + \gamma \hat{V}(y) - \varepsilon(x; \hat{V}) \nonumber \\ 
& \quad + \ell(x, u^\star(x; \hat{V})) + \gamma \hat{V}(f(x,u^\star(x; \hat{V})) \nonumber \\ 
& \quad - \left(\ell(x, u_{x \rightarrow y}) + \gamma \hat{V}(y)\right) \\
& = \ell(x, u_{x \rightarrow y}) + \gamma \hat{V}(y) - \theta(x,y; \hat{V}) -\varepsilon(x; \hat{V})\label{eq:VHatGapExpr}
\end{align}
\end{subequations}
For the optimal value function, similarly defining $u(x; V^\star)$ to be a minimizer in \eqref{eq:PiDef} for value function $V^\star$, we have
\begin{subequations}
\begin{align*}
V^\star(x) & = \mathcal{T}V^\star(x) \\
& = \ell(x, u^\star(x; V^\star)) + \gamma V^\star(f(x,u^\star(x; V^\star))) \\
& \leq \ell(x, u_{x \rightarrow y}) + \gamma V^\star(y) \, ,
\end{align*}
\end{subequations}
where the last line follows from suboptimality of $u_{x \rightarrow y}$ and $y$ in the one-stage problem. Hence $\ell(x, u_{x \rightarrow y}) \geq V^\star(x) - \gamma  V^\star(y)$. Substitution into equation \eqref{eq:VHatGapExpr} completes the proof.
\end{proof}

\begin{cor} \label{cor:BellmanSeq}
Suppose there exists a state $\overline{y}$ for which it is known that $\hat{V}(\overline{y}) = V^\star(\overline{y})$. Then from inequality \eqref{eq:VstarVhatLemma},
\begin{equation}
V^\star(x) \leq \hat{V}(x) + \theta(x,\overline{y}; \hat{V}) + \varepsilon(x; \hat{V}) \, .
\end{equation}
Now suppose there exists a feasible state trajectory $(x_0, x_1, \ldots, x_T)$ where $x_T = \overline{y}$. Then, applying Lemma \ref{lem:BellmanEps} recursively from any step $t < T$:
{\allowdisplaybreaks
\begin{align*}
V^\star(x_t) & \leq \gamma V^\star(x_{t+1}) + \hat{V}(x_t) - \gamma \hat{V}(x_{t+1}) \\
& \hspace{2.8cm}+ \theta(x_t,x_{t+1}; \hat{V}) + \varepsilon(x_t; \hat{V}) \\
& \leq \gamma\left(\gamma V^\star(x_{t+2}) + \hat{V}(x_{t+1}) - \gamma \hat{V}(x_{t+2}) \right. \\
& \hspace{2.8cm} \left.+ \theta(x_{t+1},x_{t+2}; \hat{V}) + \varepsilon(x_{t+1}; \hat{V}) \right) \nonumber \\
& \quad + \hat{V}(x_t) - \gamma \hat{V}(x_{t+1}) + \theta(x_t,x_{t+1}; \hat{V}) + \varepsilon(x_t; \hat{V}) \\
& \leq \gamma^{T-t}V^\star(x_T) + \sum_{\tau=t}^{T-1} \gamma^{\tau-t} \left(\hat{V}(x_\tau) - \gamma \hat{V}(x_{\tau+1}) \right. \\
& \hspace{3.3cm} \left. + \theta(x_\tau,x_{\tau+1}; \hat{V}) + \varepsilon(x_\tau; \hat{V}) \right) \\
& = \gamma^{T-t}\left(V^\star(x_T) - \hat{V}(x_T)\right) + \hat{V}(x_t) \\
& \hspace{1cm} + \sum_{\tau=t}^{T-1} \gamma^{\tau-t} \left(\theta(x_\tau,x_{\tau+1}; \hat{V}) + \varepsilon(x_\tau; \hat{V}) \right)
\end{align*}
Since $V^\star(x) \geq \hat{V}(x)$ for any $x \in \mathcal{X}$, and $V^\star(x_T) - \hat{V}(x_T) = V^\star(\overline{y}) - \hat{V}(\overline{y}) = 0$, the optimal value function can be bounded from below and above:
\begin{align}\label{eq:BellmanSeqIneq}
\hspace{-0.3cm}\hat{V}(x_t) \leq V^\star(x_t) & \leq \hat{V}(x_t) + \sum_{\tau=t}^{T-1} \gamma^{\tau-t} \left(\theta(x_\tau,x_{\tau+1}; \hat{V}) \right. \nonumber \\
& \hspace{3.8cm} \left. + \, \varepsilon(x_\tau; \hat{V}) \right).\!\!
\end{align}
}
\end{cor}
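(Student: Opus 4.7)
The plan is to prove both parts by direct application and then iteration of Lemma \ref{lem:BellmanEps}. For the opening inequality, I would substitute $y = \overline{y}$ into \eqref{eq:VstarVhatLemma}. This gives
\[
V^\star(x) - \gamma V^\star(\overline{y}) \leq \hat{V}(x) - \gamma \hat{V}(\overline{y}) + \theta(x,\overline{y}; \hat{V}) + \varepsilon(x; \hat{V}),
\]
and the hypothesis $\hat{V}(\overline{y}) = V^\star(\overline{y})$ cancels the two $\gamma$-weighted terms, giving exactly the first claim. This part is essentially a one-line substitution.

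For the trajectory bound, my plan is to apply Lemma \ref{lem:BellmanEps} with $(x,y) = (x_\tau, x_{\tau+1})$ for each $\tau = t, t+1, \ldots, T-1$. Each application is legitimate because $x_{\tau+1}$ is reachable from $x_\tau$ along the assumed feasible trajectory. Multiplying the inequality at step $\tau$ by $\gamma^{\tau - t}$ and summing from $\tau = t$ to $T-1$ produces a telescoping sum on the left-hand side (whose consecutive $V^\star$ terms cancel in pairs) and a matching telescoping sum in the $\hat{V}$ contributions on the right-hand side. After telescoping, what remains on the left is $V^\star(x_t) - \gamma^{T-t} V^\star(x_T)$, and on the right is $\hat{V}(x_t) - \gamma^{T-t} \hat{V}(x_T)$ plus the summed $\theta$ and $\varepsilon$ terms. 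Using $x_T = \overline{y}$ together with $V^\star(\overline{y}) = \hat{V}(\overline{y})$ cancels the trailing boundary terms and yields the claimed upper bound.

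The lower bound $\hat{V}(x_t) \leq V^\star(x_t)$ requires no recursion: since every $g_i$ constructed by Algorithm \ref{alg:GDDP} is a valid global lower bound on $V^\star$ by Lemma \ref{lem:NewLB}, so is their pointwise maximum $\hat{V}$. Combining this with the upper bound just derived gives the two-sided inequality \eqref{eq:BellmanSeqIneq}.

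I do not anticipate a real obstacle here; the corollary is essentially bookkeeping on top of Lemma \ref{lem:BellmanEps}. The only point that requires minor care is verifying that the telescoping of the $\gamma^{\tau-t} \hat{V}(x_{\tau+1})$ terms on the right aligns correctly with the $\gamma^{\tau+1-t} \hat{V}(x_{\tau+1})$ terms coming from the next summand, so that everything in between indices $t$ and $T$ cancels and only the endpoints remain. This is purely a geometric-series index check and does not rely on any additional assumptions beyond reachability along the trajectory and the assumed equality of $\hat{V}$ and $V^\star$ at the terminal state $\overline{y}$.
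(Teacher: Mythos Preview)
Your proposal is correct and matches the paper's own derivation, which is embedded directly in the corollary statement. The only cosmetic difference is that the paper unrolls Lemma~\ref{lem:BellmanEps} by recursive substitution of $V^\star(x_{t+1})$, $V^\star(x_{t+2})$, \ldots, whereas you weight each instance by $\gamma^{\tau-t}$ and sum; these are two presentations of the same telescoping argument, and your index-alignment check is exactly the care needed.
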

An example of a suitable $\overline{y}$ is an equilibrium that can be maintained while incurring zero stage cost, so that $\hat{V}(\overline{y}) = V^\star(\overline{y}) = 0$. Note that the state trajectory used in Corollary \ref{cor:BellmanSeq} can be unrelated to \xalg used in Algorithm \ref{alg:GDDP}. This leads to the following observation.

\begin{rem}[Certifying $\hat{V}(x) = V^\star(x)$] \label{rem:CertOpt}
For any $x$, the value function estimate is exactly correct, i.e.~$\hat{V}(x_t) = V^\star(x_t)$, for each step $t$ along any state trajectory starting at $x_0 = x$, satisfying the following conditions:
\begin{enumerate}
\item The Bellman error $\varepsilon(x_t; \hat{V})$ equals zero at each step $t$; 
\item Each state $x_{t+1}$ is an optimal successor state of $x_t$ (i.e., an optimal $x_+$ in problem \eqref{eq:OSP}), and hence $\theta(x_t,x_{t+1}; \hat{V}) = 0$; 
\item The final state is $\overline{y}$. 
\end{enumerate}
\end{rem}

Unfortunately it is in general difficult to achieve this certification for an arbitrary point $x$. Even if Algorithm \ref{alg:GDDP} terminates with $||\varepsilon_I||_\infty = 0$ for the $M$ elements of $\xalgm$, these will have been generated when the algorithm was initialized, and will not in general contain the required greedy policy sequence satisfying $\theta(x_t,x_{t+1}; \hat{V}) = 0$ for each $t$. If the states in the sequence are not elements of $\xalgm$, then in general we will have $\varepsilon(x_t; \hat{V}) > 0$ as the algorithm was not tailored to those points. Moreover, optimal state trajectories for many infinite horizon problems are themselves infinitely long, in which case there does not even exist a finite-time check for the conditions in Remark \ref{rem:CertOpt}.

We therefore seek more practical interpretations of Corollary \ref{cor:BellmanSeq}. The following observation suggests that a bound can be derived either by playing forward the greedy policy, or by following a pre-constructed feasible trajectory.

\begin{rem}[Bounding suboptimality at a point] \label{rem:SuboptBound}

The following two methods bound $|V^\star(x) - \hat{V}(x)|$ for an arbitrary point $x \in \mathcal{X}$, assuming $V^\star(x)$ and $\hat{V}(x)$ are both finite and that there exists a $\overline{y} \in \xalgm$ for which it is known that $\hat{V}(\overline{y}) = V^\star(\overline{y})$.
\begin{enumerate}
\item[M1.] Generate a sequence $(x_0, x_1, \ldots)$ by iteratively applying the greedy policy \eqref{eq:PiDef} from an initial state $x_0 = x$. Suppose that at some time $T-1$ the state $x_{T-1}$ is in a neighbourhood of $\overline{y}$, whence there exists a feasible input that brings about $x_T = \overline{y}$. Then, for each step $\tau$ except $\tau = T-1$, we have $\theta(x_\tau,x_{\tau+1}; \hat{V}) = 0$, since $x_{\tau+1}$ is the optimal successor state of $x_\tau$ under the greedy policy. Then from relationship \eqref{eq:BellmanSeqIneq},\begin{align}
\hat{V}(x) & \leq V^\star(x) \leq \hat{V}(x) + \gamma^{T-1}\theta(x_{T-1},\overline{y}; \hat{V}) \nonumber \\
& \hspace{3.4cm}+ \sum_{\tau=0}^{T-1} \gamma^\tau  \varepsilon(x_\tau; \hat{V}) \, .
\end{align}

\item[M2.] Suppose a feasible trajectory $(x_0, x_1, \ldots, x_T)$ can be constructed where $x_0 = x$, $x_T = \overline{y}$, and $x_1, \ldots, x_{T-1}$ are elements of $\xalgm$. Then in the limit where Algorithm \ref{alg:GDDP} terminates with $\delta = 0$, we have $\varepsilon(x_\tau; \hat{V}) = 0$ for each $x_\tau \in \xalgm$, and only $x_0$, which in general is not an element of $\xalgm$, will have $\varepsilon(x_0; \hat{V}) > 0$. Then from relationship \eqref{eq:BellmanSeqIneq},
\begin{align}
\hat{V}(x) & \leq V^\star(x) \leq \hat{V}(x) + \varepsilon(x; \hat{V}) \nonumber \\
& \hspace{2.4cm} + \sum_{\tau=0}^{T-1} \gamma^\tau \theta(x_\tau,x_{\tau + 1}; \hat{V}) \, .
\end{align}
\end{enumerate}
\end{rem}
Methods M1 and M2 are two extreme varieties of bound suggested by \eqref{eq:BellmanSeqIneq}. In M1, contributions to suboptimality arise primarily from the Bellman errors $\varepsilon(x_\tau; \hat{V})$ along the ``greedy policy trajectory''. This is in effect the common practice of obtaining an upper bound by simulating the policy and measuring the incurred costs \cite{pereira_multi-stage_1991}, but with more precise treatment of the tail end of the trajectory. In M2, which could be appropriate in a motion planning or reachability setting, contributions arise primarily from the terms $\theta(x_\tau,x_{\tau + 1}; \hat{V})$. These terms reflect the added cost of forcing a state transition from $x_\tau$ to $x_{\tau+1}$ in preference to the greedy policy.

In practice a mixture of these methods can be used to construct a bound (see footnote \ref{fn:Reachability} in \S \ref{sec:Results}). %Method M1 is used in the closed-loop simulation bounds of Section \ref{sec:RandLinSys}, with M2 employed for the ``tail'' of each trajectory to force regulation to $\overline{y}=0$ (see footnote \ref{fn:Reachability}). %A variant such as this is required in general, because a 1-step transition from an arbitrary state to $\overline{y}$ is only possible for some systems.

%The computational costs of the two methods differ substantially. Due to the combinatorial nature of finding a feasible path through points in $\xalgm$, M2 can be expected to carry higher costs unless the elements of \xalg have been chosen with this in mind. However M2 has the advantage that once a feasible state trajectory to $\overline{y}$ is found, an upper bound on $V^\star(x)$ can always be computed regardless of how the greedy policy behaves. 

%The computational costs of bounding suboptimality with M1 and M2 differ substantially. In M1, the greedy policy must be evaluated recursively by repeated solution of problem \eqref{eq:OSP}. It is unknown \textit{a priori} how many steps $T$ should be used to reach $\overline{y}$, or indeed whether the state will even reach a region from which a one-step transition to $\overline{y}$ is possible. In M2, a feasible path from $x$ to $\overline{y}$ via the samples in \xalg has to be found, which is a combinatorial problem. For each feasible candidate path found, $\theta(x_\tau,x_{\tau + 1}; \hat{V})$ has to be measured by solving problem \eqref{eq:OSP} twice -- once with and once without the additional constraint $x_+ = x_{\tau + 1}$ (caching previous solutions may mitigate this effect). M2 appears to have higher cost than M1 in most cases, however it has the advantage that once a feasible state trajectory to $\overline{y}$ is found, an upper bound on $V^\star(x)$ can always be computed regardless of how the greedy policy behaves.

\begin{rem}[Alternative convergence criterion]
The Bellman error convergence criterion in Algorithm \ref{alg:GDDP} could be replaced by one base on the upper bounding procedures described in Remark \ref{rem:SuboptBound}. In particular, the upper bound in M1, obtained by using $\hat{V}_I$ to simulate the evolution of the system forward from each of the states $x \in \xalgm$ is conventional in other DP and DDP literature \cite{pereira_multi-stage_1991, beuchat_point-wise_2017}. However, it is more costly to compute than the one-stage Bellman error, and does not directly inherit the termination guarantee provided by Theorem \ref{thm:EpsConv}.
\end{rem}

\subsection{Expressiveness of lower bounds $g_i(x)$} \label{sec:BoundExpr}

Algorithm \ref{alg:GDDP} produces lower bounding functions only of the specific form \eqref{eq:gIplus1}, and it may not be possible to represent $V^\star(x)$ as their pointwise maximum. One may therefore ask,
\begin{enumerate}
\item[(i)] What is the lowest number of points $M = |\xalgm|$ required to obtain $|V^\star(x) - \hat{V}(x)| \leq \overline{\delta}$ for all $x$ in a compact subset of $\mathcal{X}$, for some $\overline{\delta} > 0$?
\item[(ii)] What is the minimum number of iterations required to achieve this, when these $M$ points are used in the algorithm?
\end{enumerate}

Consider the case of LQR, where $V^\star(x) = \tfrac{1}{2}x^\top P x$ (see \S \ref{sec:LQR}). Inspection of \eqref{eq:gIplus1} shows that the lower bounds are of the form $g_i(x) = \tfrac{1}{2}x^\top Q x + p_i^\top x + s_i$ for some $p_i \in \mathbb{R}^n$ and $s_i \in \mathbb{R}$, where $Q$ is the quadratic state cost parameter. It is also straightforward to show that $P - Q \succ 0$, i.e., that $V^\star(x)$ has strictly higher curvature in all directions than the functions $g_i(x)$. 

To meet the condition in (i), one must generate enough lower-bounding functions for $\max_i g_i(x)$ to stay within $\overline{\delta}$ of $V^\star(x)$ everywhere, regardless of the performance of the algorithm. It becomes clear that this is equivalent to covering the state space with ellipsoids characterized by matrix $P-Q$. The number required is generally exponential in the state dimension. Answering (ii), since each of the lower bounds arises from a GDDP iteration, the number of iterations required is also exponential. Hence, GDDP does not overcome this aspect of the well-known curse of dimensionality \cite{chow_optimal_1991}.

%Now assume that $\mathcal{X}$ is compact, and that for each element $x_m \in \xalgm$, the GDDP output $\hat{V}(x) = \max_i g_i(x)$ has managed to match both the value and gradient of the optimal value function at all $M$ points in $\xalgm$. In this ``best'' case, $V^\star(x) - \hat{V}(x)$ is equal to the (scaled) distance of $x$ from the nearest point $x_m$: \[V^\star(x) - \hat{V}(x) = \min_{x_m \in \xalgm} \left\{ \frac{1}{2} (x - x_m)^\top(P - Q)(x - x_m) \right\} \, . \]

%The number of points $|\xalgm|$ required to achieve $||V^\star - \hat{V}||_\infty \leq \overline{\delta}$ is therefore at least equal to the number of ellipsoids $\{x : \tfrac{1}{2}(x - x_m)^\top (P-Q) (x - x_m) \leq \overline{\delta}\}$ required to cover set $\mathcal{X}$. This is generally exponential in the state dimension, thus the answer to (i) is that $|\xalgm|$ must also be exponential in the state dimension. Algorithm \ref{alg:GDDP} requires at least one iteration per element of $\xalgm$ to attain this solution, and thus the number of iterations in (ii) is also exponential. 

%%%%%%%%%%%%%%%%%%%%%%%%%%%%%%%%%%%%%%%%%%%%%%%%%%%%%%%%%%%%%%%%%

\section{Implementation} \label{sec:Implementation}

We now discuss how the several degrees of freedom offered by Algorithm \ref{alg:GDDP} might be treated. 

\subsection{Choice of set \xalg} \label{sec:ChoiceXBeq}

Algorithm \ref{alg:GDDP} is agnostic to how the set \xalg is generated. One natural choice is to sample the points independently from a performance (or state relevance) weighting $\mathcal{P}_0$, which defines states $x$ where one wishes to minimize the parametric cost of problem \eqref{eq:IHProblem}. The rationale for this is that by running Algorithm \ref{alg:GDDP} on these points, one obtains a small Bellman error there; one might then assume from the Bellman optimality condition \eqref{eq:BellOpt} that this leads to a good approximation of $V^\star$ at the same points.

However, it may be beneficial to generate \xalg in other ways. For example, if the priority is to ensure that the suboptimality of the value function can always be upper-bounded as in method M2 above, states could be generated systematically using a reachability criterion or planned trajectory. 

\subsection{Sampling from $\xalgm$} \label{sec:XPickerChoices}

The function \xpicker chooses which element of \xalg to use to derive the next bound $g_{I+1}(x)$. For example:
\begin{enumerate}
\item Return a random (equiprobable or with strictly positive weights) element of \xalg at each iteration. 
\item Loop sequentially through all elements of \xalg in order, returning to $x_1$ after finishing an iteration with $\hat{x} = x_M$.
\item Select the element with the largest Bellman error. %In this case it will first be necessary to measure the Bellman error $\mathcal{T}\hat{V}_I(\hat{x}) - \hat{V}_I(\hat{x})$ for all elements $\hat{x} \in \xalgm$. 
\item Iterate on the same $\hat{x}$ until its Bellman error $\varepsilon(\hat{x}; \hat{V}_I)$ has reduced to tolerance $\delta$. Then cycle through all points in this manner. 
\end{enumerate}
Note that strictly speaking Choices 2 and 4 additionally require \xpicker to be supplied with knowledge of previous iterations.

Choice 1 fulfils the conditions of Theorem \ref{thm:EpsConv}, and Choice 2 also converges under a trivial adaptation of the same theorem. Choices 3 and 4 are clearly inappropriate if the strong duality condition of Theorem \ref{thm:EpsConv} is not satisfied, as this will cause a persistent Bellman error. Results for Choices 1 and 3 are reported in Section \ref{sec:Results}.

\subsection{Convergence check} \label{sec:ConvCheck}

Measurement of the Bellman error in lines \ref{algl:BellGapStart}-\ref{algl:BellGapEnd} need not be performed at every iteration $I$.  A measurement for all elements of \xalg requires $M$ solutions of problem \eqref{eq:OSP} or \eqref{eq:OSPD}, each of which could just as well be used to generate a new lower-bounding function. It may be more attractive to spend a greater share of computation time creating lower-bounding functions, and check convergence less frequently.

\subsection{Convexity and solver compatibility}

Because Algorithm \ref{alg:GDDP} relies on repeated solution of problem \eqref{eq:OSP} or its dual \eqref{eq:OSPD}, one may wish to minimize solution time by considering convexity, problem class, or the accumulation of lower-bounding functions.

\subsubsection{Convexity of problem \eqref{eq:OSP}}
On initialization of the algorithm with $g_0(x) = 0$, the one-stage problem has a convex objective and constraints for any parameter $\hat{x}$ if $R_j \succeq 0$ for all constraints $j$ in \eqref{eq:OSPStageCost}. Therefore the only potential source of non-convexity as the algorithm progresses is the introduction of a non-convex lower bound $g_i(x)$. Inspection of \eqref{eq:gIplus1} indicates that to preserve convexity it is sufficient, but not necessary, for all of $\phi(x)^\top \hat{\lambda}^\star_\beta$, $f_x(x)^\top \hat{\nu}^\star$, and $h(x)^\top \hat{\lambda}^\star_c$ to be convex. This holds in many cases, e.g.~$h(x)$ and $f_x(x)$ affine and $\phi(x)$ convex. Even if $g_{I+1}(x)$ is not globally convex, it may still be convex in the region where it is ``active'', i.e.~where $g_{I+1}(x) = \max_i g_i(x) = \hat{V}_{I+1}(x)$. Lastly, one may choose not to add a lower-bounding function that fails a convexity test.

\subsubsection{Simplified lower-bounding functions}
If in line \ref{algl:AddLB} the new lower bound \eqref{eq:gIplus1} can be approximated from below by  its first- or second-order Taylor expansion around $\hat{x}$, the simpler functions may usefully restrict problem \eqref{eq:OSP} or its dual \eqref{eq:OSPD} to a class for which more efficient solvers exist, for example a linear or quadratically-constrained program. These simpler functions will however be less tight a lower bound on $V^\star(x)$.

\subsubsection{Redundant lower-bounding functions}
The GDDP algorithm adds a constraint \eqref{eq:OSPLB} to the one-stage problem at each iteration, which may make an existing one redundant, i.e.~$g_i(x) \leq \max_{j \neq i}g_j(x) \,\, \forall x \in \mathcal{X}$. As in conventional DDP, it may be desirable to ``prune'' these redundant lower-bounding functions \cite{de_matos_improving_2015} where they can be identified efficiently.

%%%%%%%%%%%%%%%%%%%%%%%%%%%%%%%%%%%%%%%%%%%%%%%%%%%%%%%%%%%%%%%%%

\section{Numerical results} \label{sec:Results}

We now present simulated results for constrained linear systems of various sizes, as well as a nonlinear example.

\subsection{Random linear systems} \label{sec:RandLinSys}
Random asymptotically stable, controllable linear systems (controllable pairs $(A,B)$ with $\sigma(A) \leq 0.99$) were created for different state and input dimensions $n$ and $m$. The state and input spaces were $\mathcal{X} = \mathbb{R}^n$ and $\mathcal{U} = \mathbb{R}^m$ respectively. Constraint \eqref{eq:IHSIC} was used to bound the input with $||u||_\infty \leq 1$, the stage cost was $\ell(x,u) = \tfrac{1}{2}x^\top x + \tfrac{1}{2}u^\top u$, and the discount factor was $\gamma=1$. 

The elements of \xalg were drawn from a normal distribution $\mathcal{P}_0 = \mathcal{N}(0, 5^2 \cdot I_n)$. This distribution can be viewed as a weighting of the states for which we wish to solve problem \eqref{eq:IHProblem}; see Section \ref{sec:ChoiceXBeq}.

For these systems, each lower-bounding function $g_i(x)$ generated by GDDP is a convex quadratic, and the one-stage problems \eqref{eq:OSP} are quadratically-constrained linear programs. These were solved in Gurobi 7.0.2, on a computer with a 2.6 GHz Intel Core i7 processor and 16 GB of RAM.

\subsubsection{Iterations to termination}
For a given dimension $(n,m)$, the algorithm was run for an illustrative random system to a tolerance of $\delta = 10^{-3}$. The \xpicker function used Choice 3 described in Section \ref{sec:XPickerChoices}.\footnote{Although this requires a relatively costly measurement of all Bellman errors at each iteration, it arguably gives a fairer illustration of the best-case number of iterations required than a random choice of $x_m$.} Table \ref{tab:GDDPIterations} shows the iterations required as a function of $M$ in each instance. Although the computational cost of solving the one-stage problem increases with system size, the number of iterations appears to be roughly linear in $M$ and unrelated to $n$ and $m$.

\begin{table}[tp]
\begin{center}
\caption{GDDP iterations, $\delta = 10^{-3}$, single system instance}
\begin{tabular}{|c|c|c|c|c|c|c|c|c|}
\hline
 States   &  Inputs   & \multicolumn{7}{c|}{$M$ (number of elements in $\xalgm$)}\\
$n$ & $m$ & 1 & 2 & 5 & 10 & 20 & 50 & 100 \\
\hline
\hline
 1 & 1 & 2 & 2 & 6 & 20 & 40 & 110 & 180 \\
\hline
 2 & 1 & 2 & 3 & 7 & 13 & 45 & 123 & 269 \\
\hline
 3 & 1 & 3 & 6 & 13 & 30 & 65 & 160 & 354 \\
\hline
 4 & 2 & 2 & 3 & 6  & 13 & 26 & 66 & 130 \\
\hline
 5 & 2 & 3 & 4 & 14 & 32 & 56 & 167 & 326 \\
\hline 
 8 & 3 & 2 & 3 & 6 & 16 & 41 & 115 & 263 \\
\hline
10 & 4 & 2 & 3 & 8 & 14 & 36 & 100 & 213 \\
\hline 
\end{tabular}\label{tab:GDDPIterations}
\end{center}

\end{table}%

\subsubsection{Approximation quality}
The quality of the value function $\hat{V}$ output by the GDDP algorithm can be measured by taking an expectation of the infinite-horizon cost over initial states $x_0 \sim \mathcal{P}_0$. For any sample $x_0$, we used inequality \eqref{eq:BellmanSeqIneq} to bound the suboptimality of this infinite-horizon cost with respect to the solution of \eqref{eq:IHProblem}.\footnote{In these tests, we simulated the system for 30 time steps from each sampled starting state (50 time steps for the last two rows). To avoid any kind of truncation effect at the end of the horizon, method M1 in Remark \ref{rem:SuboptBound} was used until $k$ steps before the end of the horizon, where $k \leq n$ is the number of steps needed to form a full-rank matrix $[B, AB, \ldots, A^{k-1}B]$. An input sequence, which (due to proximity to the origin) was in all cases feasible with respect to the box constraint on the input, was then generated for the last $k$ steps, in the sense of the reachability argument made for method M2. \label{fn:Reachability}}

Table \ref{tab:GDDPResults} compares Bellman errors and the suboptimality bounds in percentage terms for 
\begin{itemize}
\item[(a)] Set $\xalgm$, consisting of $M=200$ points drawn from $\mathcal{P}_0$.
\item[(b)] An ``out-of-sample'' evaluation set of 1,000 points drawn independently from the same distribution. 
\end{itemize}
Choice 1 was used for \xpicker$\!\!$, i.e.~an equiprobable random choice of $x_m$ at each iteration. The quantities reported were measured after 200 iterations, such that each element of $\xalgm$ was visited once on average. Each row of the table reports mean values over 20 random systems.

\begin{table*}[t!bp]
\begin{center}
\caption{GDDP solution quality after 200 iterations; averages across 20 random linear systems per row}
\begin{tabular}{|c|c|c|c|c|c|c|c|c|c|}
\hline
 States   &  Inputs   & \multicolumn{2}{c|}{$M = 200$ elements of \xalg} & \multicolumn{2}{c|}{1,000 independent samples from $\mathcal{P}_0$} & $\mathcal{P}_0$/\xalg & GDDP & Val.~it. & MPT \\
$n$ & $m$ &  RBE (\%)${}^1$ & Subopt.~bd.~(\%)${}^2$  & RBE (\%)${}^1$ & Subopt.~bd.~(\%)${}^2$ & subopt. ratio & time (s)${}^3$ & time (s) & time (s)\\
\hline
\hline
1 & 1 & 0.04038 & 0.3197 & 0.05931 & 0.3584 & 1.12 & 17.06 & 0.8652 & 0.8943 \\
\hline
2 & 1 & 0.3958 & 2.104 & 0.5860 & 2.666 & 1.27 & 19.10 & 113.3 & 3.185 \\
\hline
3 & 1 & 1.160 & 6.102 & 1.807 & 7.324 & 1.20 & 19.34 & 12,890 & 19.20 \\
\hline
4 & 2 & 1.080 & 4.357 & 2.068 & 5.934 & 1.36 & 23.68 & ---${}^4$ & 1989 \\
\hline
5 & 2 & 2.095 & 7.894 & 4.192 & 10.61 & 1.34 & 24.70 & ---${}^4$ & ---${}^4$ \\
\hline
8 & 3 & 4.695 & 12.20 & 10.45 & 17.49 & 1.43 & 34.83 & ---${}^4$ & ---${}^4$ \\
\hline
10 & 4 & 5.386 & 11.96 & 12.76 & 17.53 & 1.47 & 47.14 & ---${}^4$ & ---${}^4$ \\
\hline
\end{tabular}\label{tab:GDDPResults}
~\\
${}^1$Relative Bellman error, $(\mathcal{T}\hat{V}(x) - \hat{V}(x))/\hat{V}(x)$, averaged across all samples and systems. ${}^2$Suboptimality bound obtained via closed-loop simulation, weighted by $\hat{V}(x)$. ${}^3$Mean total time taken to generate all lower bounds $g_i(x)$ for systems in each row. ${}^4$Mean time above 6 hours or out of memory.
\end{center}
\end{table*}%

The algorithm fits a lower-bounding function at the points in $\xalgm$, and as a result the suboptimality bounds computed are a little worse at the out-of-sample points (17.5\% vs.~12.0\% for the largest system studied). Fig.~\ref{fig:5s2i} shows a representative plot of convergence of the suboptimality bound in the case of a 5-state, 2-input system.

\begin{figure}[tp]
\begin{center}
\includegraphics[width=8.5cm]{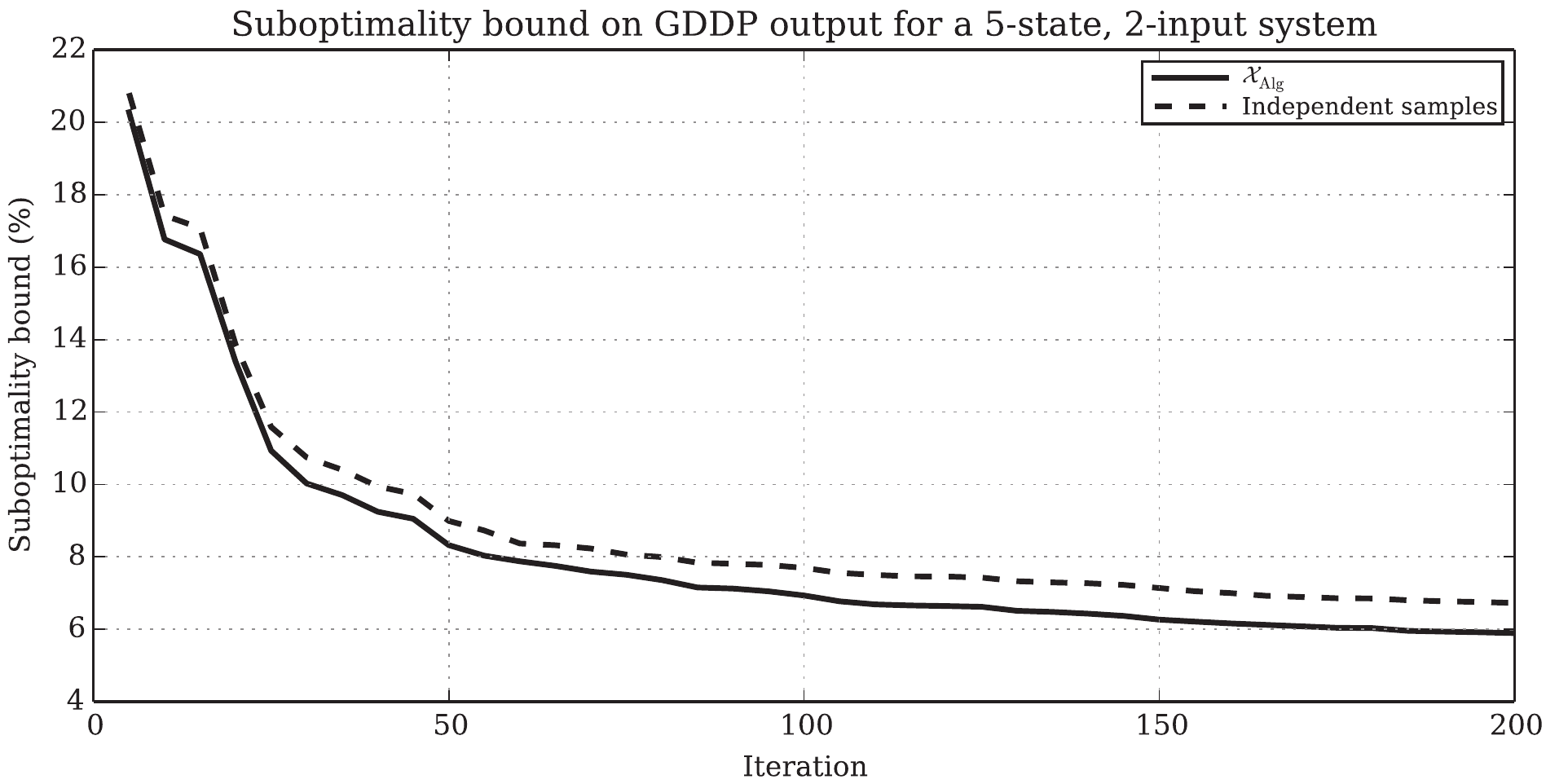}
\end{center}
\caption{Representative convergence behaviour of GDDP for a randomly-generated 5-state, 2-input system, in terms of the suboptimality implied by inequality \eqref{eq:BellmanSeqIneq} measured from closed-loop simulations. The two lines are $\hat{V}(x)$-weighted averages for the $M=200$ elements of \xalg and the 1000 samples generated independently from $\mathcal{P}_0$. Convergence was evaluated every 5 iterations.}
\label{fig:5s2i}
\end{figure}

In addition, Table \ref{tab:GDDPResults} includes comparisons with two other DP methods.

\subsubsection*{Value iteration}

Standard repeated application of the Bellman operator \cite[\S 1.2]{bertsekas_dynamic_2012}, was used to derive gridded value function approximations over the compact region $-10 \cdot \mathbf{1} \leq x \leq 10 \cdot \mathbf{1}$, i.e., two standard deviations of $\mathcal{P}_0$ from the origin in each coordinate. The state was discretized to 50 equal intervals (51 grid points) in each coordinate, and the input was discretized to 10 equal intervals (11 grid points) in each coordinate. The iterations were terminated when the maximum absolute change in value fell below $10^{-3}$. Average total times are reported in the penultimate column of Table \ref{tab:GDDPResults}. The computation time grew dramatically with system size, and while a more efficient implementation would likely reduce the times shown, the poor scaling behaviour of this approach is well known and would remain essentially unchanged.

\subsubsection*{Parametric solution}

For systems of this class, an explicit representation of the optimal value function can be obtained using the Multi-Parametric Toolbox \cite{herceg_multi-parametric_2013} for a given finite optimization horizon. As an illustrative comparison, we provide the mean computation time required to generate this optimal value function for a horizon of 10 steps. We note that the region in which the explicit value function is calculated in this manner overlaps only partially with the compact region used for value iteration, and that calculating the number of steps required to cover the same region entirely is itself a computationally expensive exercise. Average times are given in the last column of Table \ref{tab:GDDPResults}. As with value iteration, the computation time increased dramatically with system dimension, although we note that for systems small enough for the explicit controller to be computed, the online effort to evaluate the resulting policy \cite{jones_logarithmic-time_2006} would be substantially lower than solving \eqref{eq:PiDef} with the approximate value function returned by GDDP.

\subsection{Nonlinear system}

We demonstrate GDDP for the 4-state simplified ball-and-beam example from \cite[\S 10.2]{sastry_nonlinear_1999}. The state vector is $x = [r, \dot{r}, \theta, \dot{\theta}]^\top$ where $r$ is the position in metres along the beam from the pivot, and $\theta$ is the angle in radians from horizontal measured counter-clockwise. The simplification we adopt from \cite{sastry_nonlinear_1999} is to model the rolling of the ball as frictionless sliding, to avoid having to include rolling and contact interactions between the ball and beam.

The single input is a torque $\tau$ in Newtons applied at the pivot, constrained to an interval $-\tau_{\rm max} \leq u \leq \tau_{\rm max}$. The Euler-discretized dynamics are $x_+ = f_x(x) + F_u(x)u$, with
\begin{equation}
f_x(x) = x + \bmat{\dot{r} \\ r \dot{\theta}^2 - g \sin \theta \\ \dot{\theta} \\ -\frac{2mr\dot{r} + mgr\cos\theta}{mr^2 + J_b} } \!\!\Delta t \, , \,\, F_u(x) = \!\bmat{0 \\ 0 \\ 0 \\ \frac{1}{mr^2 + J_b}} \!\!\Delta t \, ,
\end{equation}
where $m$ is the mass of the ball, $J_b$ is the moment of inertia of the beam, $g$ is gravitational acceleration, and $\Delta t$ is the time discretization interval.

The parameters used were $m = 0.1$ kg, $J_b = 0.5$ kg m${}^2$, $g = 9.81$ m s${}^{-2}$, and $\Delta t = 0.1$ s. The cost function was $\ell(x,u) = \tfrac{1}{2}x^\top Q x + \tfrac{1}{2}u^\top Ru$, with $Q = {\rm diag}\{10, 1, 1, 1\}$ and $R = 0.01$, and the input constraint was $\tau_{\rm max} = 3$. The set \xalg contained $M=500$ samples, half of which were drawn from a normal distribution $\mathcal{N}(0, \Xi_1)$ with covariance matrix $\Xi_1 = {\rm diag}\{0.5^2, 0.5^2, 0.5^2, 0.5^2\}$, and the other half of which were samples from $\mathcal{N}(0, \Xi_2)$ with $\Xi_2 = {\rm diag}\{0.1^2, 0.1^2, 0.1^2, 0.1^2\}$. This choice attached relatively high weight to behaviour around the equilibrium position. The one-stage problems were solved by brute force, with no special adaptations to account for the problem structure.% (specifically, inputs were found via a search over the interval $[-\tau_{\rm max}, \tau_{\rm max}]$, and $\zeta_1(\nu, \lambda_\alpha)$ was computed by exhaustive search over the 4-dimensional space).
 
Fig.~\ref{fig:bb_transient} shows regulation of the system to the origin from $x_0 = [1, 0, -0.1745, 0]^\top$ (the ball 1 m to the right of the pivot in a position 10 degrees below horizontal) under the derived control policy. Results are plotted for 100, 200, 300, and 400 GDDP iterations, which took 299, 723, 1206, and 1705 seconds respectively. The control policy after 100 iterations caused the state trajectory to diverge, whereas the control was stabilizing with improving transient performance after higher numbers of iterations. 

A linear feedback controller based on the Riccati solution for the system linearized about the origin (see \S \ref{sec:LQR}) was not able to stabilize the system from this initial condition. 

\begin{figure}[tp] 
\begin{center}
\includegraphics[width=8.6cm]{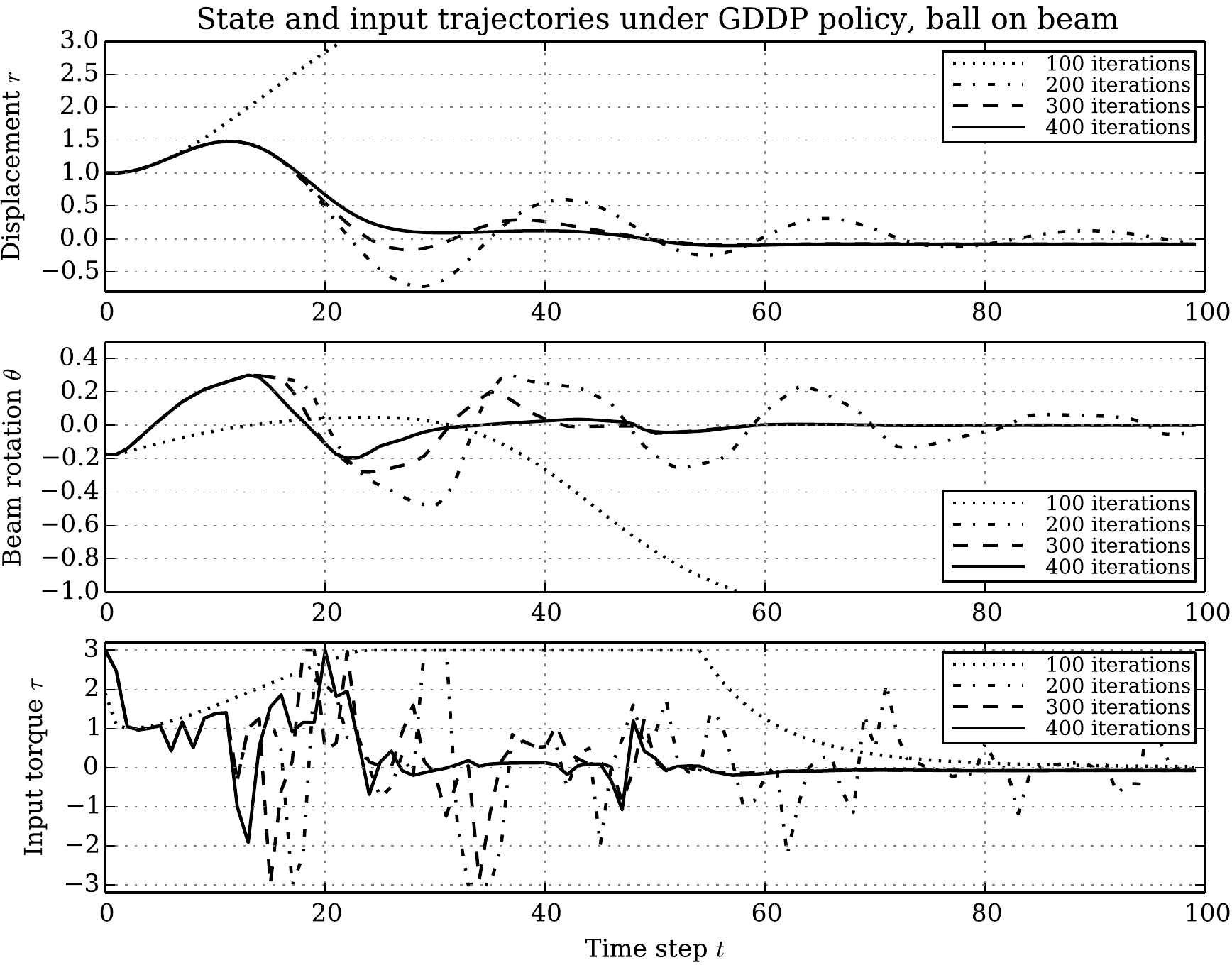}
\end{center}
\vspace{-0.5cm}
\caption{State and input trajectory from $x_0 = [1, 0, -0.1745, 0]^\top$ under the greedy policy obtained after 100, 200, 300, and 400 iterations. Small residual state offsets after the transient are due to the inexact value function approximation around the origin.}
\label{fig:bb_transient}
\end{figure}

%%%%%%%%%%%%%%%%%%%%%%%%%%%%%%%%%%%%%%%%%%%%%%%%%%%%%%%%%%%%%%%%%

\section{Conclusion} \label{sec:Conclusion}

This article proposed a means of constructing a series of lower bounding functions whose pointwise maximum achieves progressively tighter approximations of the optimal value function for an infinite-horizon control problem. In the linear examples tested, good closed-loop bounds on performance were achieved in only a few dozen iterations.

A number of potential extensions present themselves. Firstly, a stronger connection to the finite-horizon DDP algorithm could be made by considering sequences of states, for example by solving a multi-stage problem in place of the one-stage problem at each iteration of the algorithm. In a finite horizon setting, the conventional DDP algorithm \cite{pereira_multi-stage_1991} works by refining such a sequence from a known initial state until an upper and lower bound on the optimal trajectory cost have converged, whereas in the GDDP algorithm presented here, the successor states $x_+$ are not used as sample states for the construction of subsequent lower-bounding functions. The notion of refining policies along state trajectories is well-known in the reinforcement learning literature, where on- and off-policy learning are often contrasted \cite{busoniu_reinforcement_2010}. It may be that a mixture of the two can achieve improvements in our setting.

Secondly, it would be attractive to be able to extend the class of lower-bounding functions present in definition \eqref{eq:gIplus1} to be effective for a wider range of problems with highly non-convex value functions, for example those encountered in complex reinforcement learning problems. Presently, a strict increase in the value function lower bound is only guaranteed at a state $\hat{x}$ if strong duality holds in the one-stage problem \eqref{eq:OSP}. However, this problem and its dual can be defined in numerous ways starting from the original statement \eqref{eq:IHProblem}, and it is not clear whether an alternative formulation might have attractions over the one we have presented.

Thirdly, although we have proven finite convergence of the algorithm (to a desired Bellman tolerance) in the case of strong duality in each one-stage problem instance, we do not derive \textit{a priori} bounds on the number of iterations, or the cardinality of $\xalgm$, required to achieve a desired value function approximation accuracy in the sense of the discussion in Section \ref{sec:BoundExpr}. These appear to be difficult problems whose solutions rely on characterizing the accumulating lower-bounding constraints as a function of the original problem data. 

%%%%%%%%%%%%%%%%%%%%%%%%%%%%%%%%%%%%%%%%%%%%%%%%%%%%%%%%%%%%%%%%%
\bibliographystyle{plain}
\bibliography{warrington}
\begin{IEEEbiography}[{\includegraphics[width=1in,height=1.25in,clip,keepaspectratio]{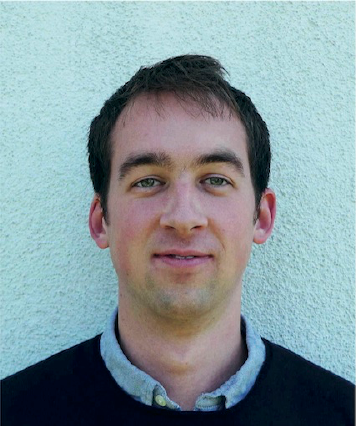}}]{Joseph Warrington}
is a Senior Scientist in the Automatic Control Lab (IfA) at ETH Zurich, Switzerland. His Ph.D.~is from ETH Zurich (2013), and his B.A.~and M.Eng.~degrees in Mechanical Engineering are from the University of Cambridge (2008). From 2014-2016 he worked as an energy consultant at Baringa Partners LLP, London, UK, and he has also worked as a control systems engineer at Wind Technologies Ltd., Cambridge, UK, and privately as an operations research consultant. He is the recipient of the 2015 ABB Research Prize for an outstanding PhD thesis in automation and control, and a Simons-Berkeley Fellowship for the period January-May 2018. His research interests include dynamic programming, large-scale optimization, and predictive control, with applications including power systems and transportation networks.
\end{IEEEbiography}
\vfill
\newpage
\begin{IEEEbiography}[{\includegraphics[width=1in,height=1.25in,clip,keepaspectratio]{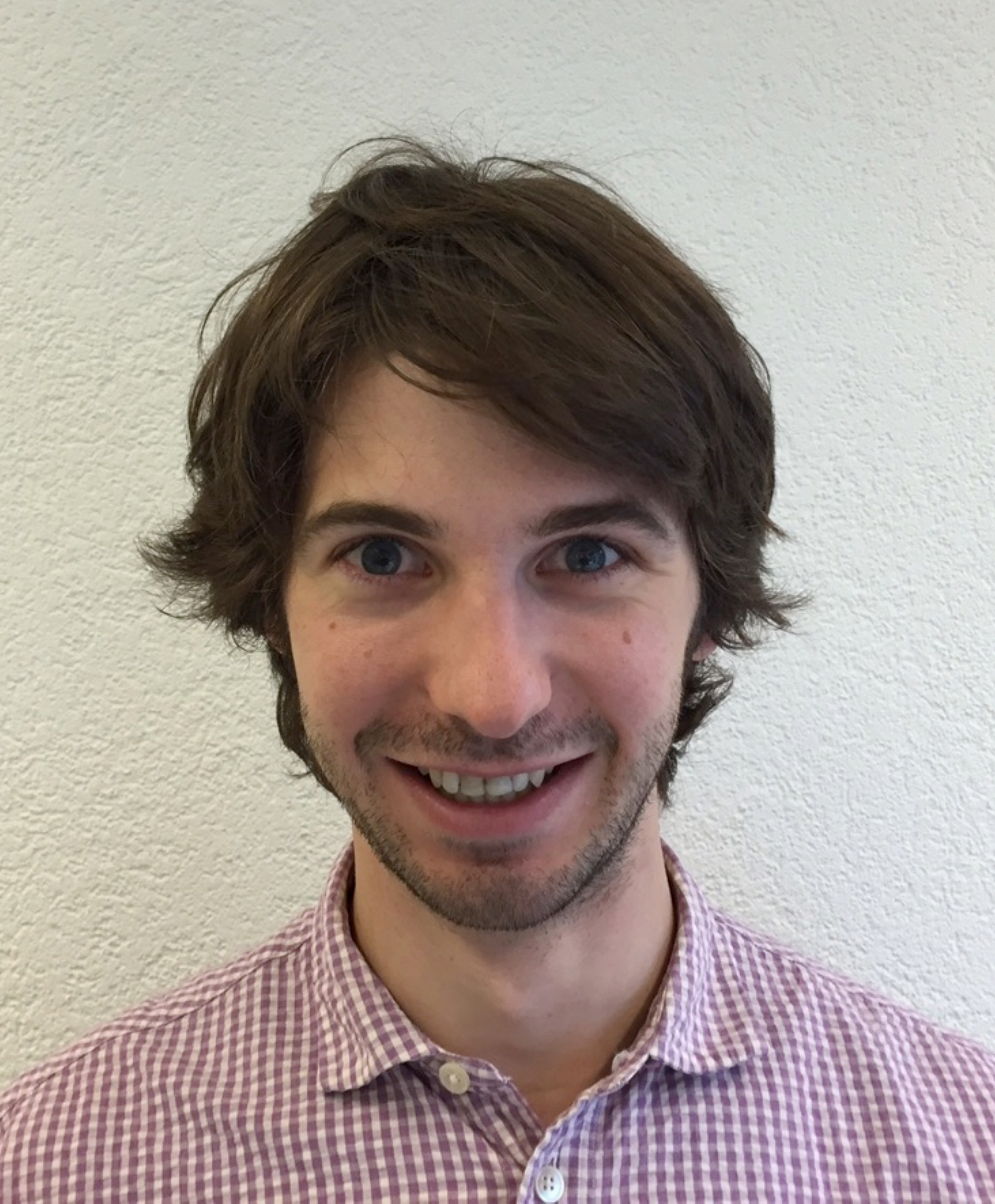}}]{Paul N. Beuchat}
received the B.Eng.~degree in mechanical engineering and B.Sc. in physics from the University of Melbourne, Australia, in 2008, and the M.Sc.~degree in robotics, systems and control from ETH Zurich, Switzerland, in 2014, where he is currently working towards the Ph.D. degree at the Automatic Control Laboratory. From 2009-2012 he worked as a subsurface engineer for ExxonMobil, based in Melbourne, Australia. His research interests are control and optimization of large scale systems, with a focus towards developing approximate dynamic programming techniques for applications in the areas of power systems, building control, and coordinated flight.
\end{IEEEbiography}
\begin{IEEEbiography}[{\includegraphics[width=1in,height=1.25in,clip,keepaspectratio]{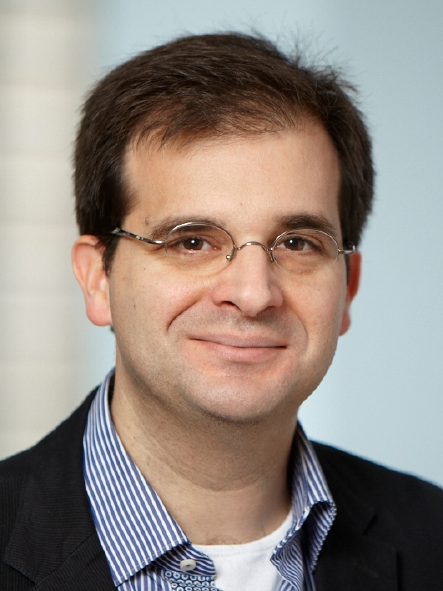}}]{John Lygeros}
completed a B.Eng. degree in electrical engineering in  1990 and an M.Sc. degree in Systems Control in 1991, both at Imperial College of Science Technology and Medicine, London, UK. In 1996 he obtained a Ph.D.~degree from the Electrical Engineering and Computer Sciences Department, University of California, Berkeley. During the period 1996--2000 he held a series of post-doctoral researcher appointments at the Laboratory for Computer Science, M.I.T., and the Electrical Engineering and Computer Sciences Department at U.C.~Berkeley. Between 2000 and 2003 he was a University Lecturer at the Department of Engineering, University of Cambridge, UK, and a Fellow of Churchill College. Between 2003 and 2006 he was an Assistant Professor at the Department of Electrical and Computer Engineering, University of Patras, Greece. In July 2006 he joined the Automatic Control Laboratory at ETH Zurich, where he is currently serving as the Head of the Automatic Control Laboratory and the Head of the Department of Information Technology and Electrical Engineering. His research interests include modelling, analysis, and control of hierarchical, hybrid, and stochastic systems, with applications to biochemical networks, automated highway systems, air traffic management, power grids and camera networks. John Lygeros is a Fellow of the IEEE, and a member of the IET and the Technical Chamber of Greece; since 2013 he has served as the Treasurer of the International Federation of Automatic Control.
\end{IEEEbiography}
\vfill
\end{document}